\newtheorem{theorem}{Theorem}[section]
\newtheorem{lemma}[theorem]{Lemma}
\newtheorem{proposition}[theorem]{Proposition}
\theoremstyle{definition}
\newtheorem{definition}[theorem]{Definition}
\newtheorem{remark}[theorem]{Remark}
\newtheorem{example}[theorem]{Example}
\numberwithin{equation}{section}
\newcommand{\R}{\mathbb{R}}
\newcommand{\Z}{\mathbb{Z}}
\newcommand{\F}{\mathbb{F}}
\newcommand{\La}{\Lambda}
\newcommand{\Sp}{\mathbb{S}}
\newcommand{\p}{^{\prime}}
\newcommand{\co}{\colon\thinspace}
\begin{document}

\title{$\Gamma$-extensions of the spectrum of an orbifold}

\author{Carla Farsi}
\address{Department of Mathematics, University of Colorado at Boulder,
    Campus Box 395, Boulder, CO 80309-0395}
\email{farsi@euclid.colorado.edu}

\author{Emily Proctor}
\address{Department of Mathematics, Middlebury College, Middlebury, VT 05753}
\email{eproctor@middlebury.edu}

\author{Christopher Seaton}
\address{Department of Mathematics and Computer Science,
    Rhodes College, 2000 N. Parkway, Memphis, TN 38112}
\email{seatonc@rhodes.edu}

\begin{abstract}

We introduce the $\Gamma$-extension of the spectrum of the Laplacian
of a Riemannian orbifold, where $\Gamma$ is a finitely generated discrete group.
This extension, called the \emph{$\Gamma$-spectrum,} is the union of the Laplace spectra
of the $\Gamma$-sectors of the orbifold, and hence constitutes a Riemannian invariant that
is directly related to the singular set of the orbifold.
We compare the $\Gamma$-spectra of known examples of isospectral pairs and
families of orbifolds and demonstrate that in many cases, isospectral orbifolds need not be
$\Gamma$-isospectral.  We additionally prove a version of Sunada's theorem that allows us to
construct pairs of orbifolds that are $\Gamma$-isospectral for any choice of $\Gamma$.

\end{abstract}

\subjclass[2010]{Primary 58J53; 57R18; Secondary 53C20.}

\keywords{Orbifold, isospectral, twisted sector, spectral geometry}

\maketitle

\tableofcontents

% XXXXXXXXXXXXXXXXXXXXXXXXXXXXXXXXXXXXXXXXXXXXXXXXXXXXXXXXXXXXXX
% XXXXXXXXXX    SECTION: INTRODUCTION
% XXXXXXXXXXXXXXXXXXXXXXXXXXXXXXXXXXXXXXXXXXXXXXXXXXXXXXXXXXXXXX

\section{Introduction}
\label{sec-Intro}

A central question of spectral geometry concerns the extent to which the Laplace spectrum of an orbifold influences
its geometry and topology, and vice versa.   For example, if two orbifolds have the same spectrum, they
must have the same volume and dimension \cite{farsi}.  On the other hand, there are many examples of
isospectal, nonisometric orbifolds.  Most of the early examples of isospectral orbifolds were manifolds
\cite{milnor, vigneras, ikedalensspace, sunada, gordon, schueth}.  More recently, however, attention
has turned to the study of the spectrum of orbifolds having nontrivial singular sets and the interplay
between the spectrum and the singular set.   In 2006, Shams, Stanhope, and Webb
produced arbitrarily large finite families of isospectral orbifolds \cite{ssw}.  Any pair of orbifolds in
a given family contains points with nonisomorphic isotropy groups, but all orbifolds in the family have
maximal isotropy groups of the same order.   Rossetti, Schueth, and Weilandt
have since produced examples of pairs isospectral orbifolds having different maximal isotropy order
\cite{rsw}.  Working independently, both Sutton, and the second author working with
Stanhope, found examples of continuous families of isospectral, nonisometric orbifolds
\cite{sutton, procstan}. More recently, Shams produced examples of pairs of isospectral, nonisometric
orbifold lens spaces \cite{shams}.  In the positive direction, Dryden, Gordon,
Greenwald, and Webb constructed the asymptotic expansion for the heat trace of
a general compact Riemannian orbifold in such a way that the contribution of each piece of the
singular set to the heat invariants is evident \cite{dggw}.  They used their results to show
that the spectrum can distinguish orbifolds within certain classes of two-dimensional orbifolds.
Dryden and Strohmaier showed that for a compact orientable hyperbolic orbisurface,
the numbers and types of singular points as well as the length spectrum of the orbifold
are completely determined by the Laplace spectrum \cite{drystroh}.
This was shown independently by Doyle and Rossetti, who in \cite{DoyleRossetti} also
proved an extension to the case of compact hyperbolic orbisurfaces that are not necessarily connected
or orientable.
In \cite{proctor}, the second author proved that any isospectral collection of orbifolds with
sectional curvature uniformly bounded below and having only isolated singular points contains
only finitely many orbifold category homeomorphism types.

In this paper, we address the question of the relationship between
the spectrum of an orbifold $O$ and its singular set by introducing
the \textit{$\Gamma$-spectrum} of $O$.  The $\Gamma$-spectrum is the
$\Gamma$-extension of the spectrum of the Laplacian in the sense of
\cite{tamanoi2}, meaning that it is an application of the spectrum
to the \emph{$\Gamma$-sectors} of $O$.  Originally introduced in
\cite{tamanoi1} for global quotients and \cite{farseanonvanish} for
general orbifolds, the $\Gamma$-sectors of $O$ consist of a disjoint
union of orbifolds of varying dimensions including a copy of $O$ as
well as other components that finitely cover the singular set of
$O$.  Special cases include the \emph{inertia orbifold} when $\Gamma
= \Z$ and the \emph{multisectors} when $\Gamma$ is a free group; see
e.g. \cite{ademleidaruan}.  The orbifold of $\Gamma$-sectors can be
thought of an ``unraveling" of the singular set of $O$ into distinct
orbifolds, where the group $\Gamma$ determines the type of
singularities that are unraveled. In this sense, the
$\Gamma$-spectrum includes the ordinary spectrum of $O$ as well as,
approximately speaking, the spectra of various components of the
singular locus of $O$. The technique of extending an orbifold
invariant by considering its $\Gamma$-extension has been studied in
the case of Euler characteristics and related orbifold invariants;
here, we apply this technique to the case of an invariant of a
Riemannian orbifold, the spectrum of the Laplacian.

The definition of an orbifold varies considerably from author
to author and discipline to discipline, based on the features of the
orbifold structure that are under consideration.  In particular, in
Riemannian geometry, orbifolds are usually assumed to be effective,
and hence can be presented as quotient orbifolds; see Section~\ref{sec-Background}.
Considering the $\Gamma$-spectrum of a noneffective orbifold leads
to trivial examples that are contrary to the spirit of the
investigation presented here; see Section~\ref{subsec-BackSpectrum}
and in particular Examples~\ref{ex-MtrivZ2} and \ref{ex-MtrivZ3D6}.  Hence, we consider the
$\Gamma$-spectrum to be most interesting when applied to an
effective Riemannian orbifold.  For this reason, though we do explain the
(direct) generalizations of the definitions presented here to general
orbifolds presented by groupoids, we otherwise restrict our attention to
quotient orbifolds.
Note that the $\Gamma$-sectors of a nontrivial orbifold $O$ may include
noneffective orbifolds even when $O$ is assumed effective, and thus our discussion requires the
consideration of noneffective orbifolds.  If $O$ is a quotient orbifold, however,
the $\Gamma$-sectors of $O$ are quotient orbifolds as well, even when they are
not effective.

This paper is organized as follows.  In Section~\ref{sec-Background}, we recall the relevant
definitions and fix notation.  The $\Gamma$-sectors are defined in Section \ref{subsec-BackSectors},
and the $\Gamma$-spectrum is defined in Section \ref{subsec-BackSpectrum}.  In Section \ref{subsec-HeatKer},
we discuss some immediate consequences of the definition of the $\Gamma$-spectrum.
Section~\ref{sec-RSW} contains a description of the $\Gamma$-sectors of several collections
of known isospectral, nonisometric orbifolds in order to determine whether they are also
$\Gamma$-isospectral.  In Section~\ref{sec-Sunada}, we prove a Sunada-type theorem for
$\Gamma$-isospectrality and exhibit examples of nonisometric orbifolds that are
nontrivially $\Gamma$-isospectral for different choices of $\Gamma$.

\section*{Acknowledgements}

The authors would like to thank the referee for detailed corrections and helpful suggestions.
The first author would like to thank the University of Florence for hospitality during the completion of
this manuscript.  The third author was partially funded by a Rhodes College Faculty Development Endowment
Grant.

\bigskip
% XXXXXXXXXXXXXXXXXXXXXXXXXXXXXXXXXXXXXXXXXXXXXXXXXXXXXXXXXXXXXX
% XXXXXXXXXX    SECTION: BACKGROUND
% XXXXXXXXXXXXXXXXXXXXXXXXXXXXXXXXXXXXXXXXXXXXXXXXXXXXXXXXXXXXXX

\section{Background and definitions}
\label{sec-Background}

Let $O$ be an $n$-dimensional orbifold.  We will be primarily interested in the case
that $O$ can be presented as a \emph{quotient orbifold}, i.e. $O$ is given by $G\backslash M$ with orbifold structure
given by the translation groupoid $G\ltimes M$ where
$M$ is a smooth manifold and $G$ is a compact Lie group acting
with finite isotropy groups (on the left) on $M$.  If $G$ is finite, then we say $O$ is
a \emph{global quotient orbifold}.  More generally, an orbifold is defined by a Morita
equivalence class of a proper \'{e}tale Lie groupoid; see \cite[Definition 1.48]{ademleidaruan} and the following paragraph.  A proper \'{etale} Lie groupoid can be thought of as an atlas for the orbifold it represents, and the orbifold is given by the orbit space of the groupoid under the action of its arrows.
In either case, $O$ consists of a
second countable Hausdorff space $\mathbb{X}_O$, the \emph{underlying space of $O$},
that is covered by \emph{orbifold charts} of the form $\{ V_x, G_x, \pi_x \}$
where $V_x$ is diffeomorphic to $\R^n$, $G_x$ is a finite group acting linearly on
$V_x$, and $\pi_x\co V_x \to \mathbb{X}_O$ induces a homeomorphism of $G_x\backslash V_x$ onto an open subset
of $\mathbb{X}_O$.  If $O$ is \emph{effective},
i.e. for each orbifold chart $\{ V_x, G_x, \pi_x\}$ the group $G_x$ acts effectively on $V_x$,
then it is well known that $O$ can be
presented as a quotient orbifold using the \emph{frame bundle construction}, and that $G$ can be taken to be $O(n)$;
see \cite[Theorem 1.23]{ademleidaruan}.  It has been conjectured that all orbifolds can be presented as quotient
orbifolds; see \cite[p. 27]{ademleidaruan}.

Equivalence of orbifolds is subtle and most easily described in terms of
Morita equivalence of groupoid presentations.  Two groupoids $\mathcal{G}$ and $\mathcal{G}^\prime$ are \textit{Morita equivalent} if there is a groupoid $\mathcal{H}$ and a chain of groupoid equivalences $\mathcal{G}\leftarrow\mathcal{H}\rightarrow\mathcal{G}^\prime$.  (See \cite[Definition 1.42]{ademleidaruan} for the definition of groupoid equivalence.)  Each groupoid equivalence induces a homeomorphism between underlying spaces of the orbifolds that the groupoids represent, preserving the isomorphism class of the isotropy group of each point in the orbifold.  By identifying the underlying spaces via these homeomorphisms, one may think of $\mathcal{H}$ as an orbifold atlas that refines the orbifold atlases corresponding to $\mathcal{G}$ and $\mathcal{G}^{\prime}$.   Note in particular that for two groupoids $\mathcal{G}$ and $\mathcal{G}^\prime$ to be Morita equivalent, there need not be a map directly from $\mathcal{G}$ to $\mathcal{G}^{\prime}$.  Intuitively, this corresponds to the fact that the atlas corresponding to $\mathcal{G}$ might not be fine enough to locally define a map.

If $O$ and $O^{\prime}$ are quotient orbifolds represented by $G\ltimes M$ and $H\ltimes N$ respectively, then an \textit{equivariant map} from $O$ to $O^\prime$ consists of a homomorphism $\varphi:G\to H$ and a smooth map $f:M\to N$ such that $f(gx) = \varphi(g)f(x)$ for all $g\in G$ and $x\in M$.   It has been shown that if we restrict our attention to the category of smooth translation groupoids $G\ltimes M$ and equivariant maps, then Morita equivalence is still a well-defined equivalence relation on this category \cite{pronkscull}.  Thus when we restrict our attention to orbifolds that can be represented as quotients, we need only concern ourselves with equivariant groupoid equivalences, and with Morita equivalences via smooth translation groupoids.  Hence we define a \textit{diffeomorphism} between quotient orbifolds to be an equivariant groupoid equivalence.  We note that by Proposition 3.5 in \cite{pronkscull}, every diffeomorphism between quotient orbifolds is given as a composition of quotient and inductive groupoid equivalences.  We say that two quotient orbifolds $O$ and $O^{\prime}$ represented by $G\ltimes M$ and $H\ltimes N$ respectively are \textit{diffeomorphic} if they can be connected by a Morita equivalence
\begin{equation*}
G\ltimes M\xleftarrow{(\varphi,f)}K\ltimes Z\xrightarrow{(\omega,h)}H\ltimes N
\end{equation*}
where $(\varphi,f)$ and $(\omega,h)$ are equivariant groupoid equivalences.

Classically, a Riemannian metric on an orbifold $O$ has been defined via charts.  For each chart $\{V_x,G_x,\pi_x\}$, let $g_x$ be a $G_x$-invariant Riemannian metric on $V_x$.   Patching the charts together via a partition of unity gives a \emph{Riemannian structure} on $O$.
If $O$ is presented as a quotient $G \ltimes M$
for $G$ a compact Lie group, then any $G$-invariant metric on $M$
induces a metric on the orbifold $O$, and any metric on $O$ is
induced by such a metric; see \cite[Proposition 2.1]{stanuribe}. In
particular, given a point $x \in M$, a slice $W_x$ at $x$ for the
$G$-action on $M$ induces a local chart $\{ W_x, G_x, \pi_x \}$ for
the orbifold $O$ at $Gx$; see \cite[Definition 2.3.1]{duistermaatKolk}.
The $G$-invariant metric on $M$
restricts to a $G_x$-invariant metric on $W_x$. Note that distinct
$G$-invariant metrics on $M$ may correspond to the same metric on
$O$.

When studying the Riemannian geometry of orbifolds, it is common and
natural to restrict to the consideration of effective orbifolds.
Indeed, the Riemannian structure of a non-effective orbifold $O$ is
identical to that of its \emph{effectivisation} $O_{\textit{eff}}$; see \cite[Definition 2.33]{ademleidaruan}, except for a minor change to integration on the orbifold, see Section~\ref{subsec-HeatKer}.
If $O$ is a noneffective orbifold with quotient presentation $G\ltimes M$
and $K$ is the (necessarily normal) subgroup of $G$ that acts trivially
on $M$, then $O_{\textit{eff}}$ is the effective orbifold presented by
$G/K \ltimes M$, and it is clear that the Riemiannian structures of
$O$ and $O_{\textit{eff}}$ coincide.  However, when considering
the $\Gamma$-sectors and $\Gamma$-spectrum, we will see that a
noneffective group action may arise in the $\Gamma$-sectors, even
in the case that $O$ is effective. For this reason, we make the
following.

\begin{definition}
Let $O=(G\ltimes M, g)$ and $O^\prime=(H\ltimes N,g^{\prime})$ be Riemannian quotient orbifolds.
Let $\bar g$ and $\bar g^{\prime}$ be correponding invariant metrics on $M$ and $N$ respectively.
An \emph{isometry} from $O$ to $O^\prime$ is an equivariant groupoid equivalence $(\varphi,f): G\ltimes M \to H\ltimes N$ such
that $f^\ast \bar g^\prime = \bar g$.  We say that $O$ and $O^\prime$ are \emph{isometric} if they can be connected
by a chain of isometries $G\ltimes M\xleftarrow{(\varphi,f)}K\ltimes Z\xrightarrow{(\omega,h)}H\ltimes N$.
If $O_{\textit{eff}}$ and $O_{\textit{eff}}^\prime$ are isometric, we say that $O$ and $O^\prime$
are \emph{effectively isometric}.
\end{definition}

For instance, any Riemannian manifold $M$ may be equipped with the trivial action of a finite group $G$
resulting in the noneffective orbifold $O$ presented by $G \ltimes M$.  The orbifolds $M$ and $O$
are identical in every sense that is significant to Riemannian geometry, and hence are effectively
isometric.  However, because the isotropy group of each point in an orbifold
is invariant under diffeomorphism, they are not isometric as orbifolds.

Every point in a connected, noneffective orbifold $O$ has nontrivial isotropy and hence is singular.
We refer to points that correspond to nonsingular points in the associated effective orbifold
$O_{\textit{eff}}$ as \emph{effectively nonsingular} and points that
correspond to singular points in the associated effective orbifold as \emph{effectively singular}.
An orbifold given by a smooth
manifold equipped with the trivial action of a finite group will be referred to as \emph{effectively smooth}.

% XXXXXXXXXXXXXXXXXXXXXXXXXXXXXXXXXXXXXXXXXXXXXXXXXXXXXXXXXXXXXX

\subsection{$\Gamma$-Sectors of an orbifold}
\label{subsec-BackSectors}

We recall the following.

\begin{definition}[\cite{farseagensectors}]
\label{def-GammaSectors} Let $\Gamma$ be a finitely generated
discrete group and let $O$ be presented as a quotient orbifold
$G\ltimes M$ as above.  Let $(\varphi)$ denote the $G$-conjugacy
class of a homomorphism $\varphi\co\Gamma\to G$.  The \emph{orbifold
of $\Gamma$-sectors} $\widetilde{O}_\Gamma$ of $O$ is the disjoint
union of orbifolds presented by
\[
    \bigsqcup\limits_{(\varphi) \in \mathrm{HOM}(\Gamma, G)/G} C_G(\varphi) \ltimes M^{\langle\varphi\rangle}
\]
where $M^{\langle\varphi\rangle}$ denotes the collection of points
fixed by each element of the image of $\varphi$ in $G$ and
$C_G(\varphi)$ denotes the centralizer of the image of $\varphi$.
For a given $\varphi \in \mathrm{HOM}(\Gamma,G)$, we refer to each
connected component of the orbifold presented by $C_G(\varphi)
\ltimes M^{\langle\varphi\rangle}$ as a \emph{$\Gamma$-sector} of
$O$.  We let $m_{(\varphi)}$ denote the number of connected components
of $C_G(\varphi) \ltimes M^{\langle\varphi\rangle}$ and let
$\widetilde{O}_{(\varphi)}^i$ for $i = 1, \ldots, m_{(\varphi)}$ denote
the corresponding connected orbifolds.  If
$C_G(\varphi) \ltimes M^{\langle\varphi\rangle}$ is connected, we denote
the corresponding sector simply $\widetilde{O}_{(\varphi)}$.
The sector corresponding to
the trivial homomorphism $\Gamma\to G$ is diffeomorphic to $O$ and
is referred to as the \emph{nontwisted sector} (or \emph{nontwisted
sectors} if $O$ is not connected); other sectors are referred to as
\emph{twisted sectors}.
\end{definition}

If $\Gamma = \Z$, since any homomorphism is completely
determined by its value on a generator of $\Z$, there is a
bijective correspondence between $\mathrm{HOM}(\Z, G)/G$ and the
conjugacy classes of $G$.  In this case, in order to simplify
notation, we identify the conjugacy class of a homomorphism $\Z\to
G$ with the conjugacy class of the image of a fixed generator of
$\Z$.  Similarly, $\mathrm{HOM}(\Z^\ell, G)/G$ corresponds to the
orbits of commuting $\ell$-tuples $(g_1, \ldots, g_\ell) \in G^\ell$
under the action of $G$ by simultaneous conjugation, while
$\mathrm{HOM}(\F_\ell, G)/G$, where $\F_\ell$ denotes the
free group with $\ell$ generators, corresponds to the orbits of (not
necessarily commuting) $\ell$-tuples under simultaneous conjugation.

It is easy to see that each $C_G(\varphi)$ acts with finite isotropy groups
on $M^{\langle\varphi\rangle}$ so that $C_G(\varphi) \ltimes
M^{\langle\varphi\rangle}$ does indeed present an orbifold. If $\psi = h\varphi h^{-1}$ for some $h \in G$, then
left-translation by $h$ induces a diffeomorphism between
$M^{\langle\varphi\rangle}$ and $M^{\langle\psi\rangle}$, and
conjugation by $h$ intertwines the respective actions of
$C_G(\varphi)$ and $C_G(\psi)$, so that the orbifold $C_G(\varphi)
\ltimes M^{\langle\varphi\rangle}$ does not depend on the choice of
representative $\varphi$ of $(\varphi)$.  Note
that $M^{\langle \varphi \rangle}$ is empty unless the image of
$\varphi$ is contained in the isotropy group of at least one point
$x \in M$. One implication is that if $O$ is a smooth manifold with
no group action, then $\widetilde{O}_\Gamma = O$ for each $\Gamma$.  More generally, if $\Gamma = \Z^\ell$ or $\Gamma = \F_\ell$ for $\ell \geq 1$, then
$\widetilde{O}_\Gamma = O$ if and only if $O$ is a manifold.

Suppose $O$ is a noneffective orbifold presented by $G\ltimes M$ so that $O_{\textit{eff}}$
is presented by $G/K \ltimes M$ where $K$ is the finite, normal subgroup
of $G$ acting trivially.  Let $\rho\co G \to G/K$ denote the quotient
homomorphism.  Then for each $\Gamma$, there is a surjective map
from $\mathrm{HOM}(\Gamma, G)/G$ to $\mathrm{HOM}(\Gamma,
G/K)/(G/K)$ given by sending $\varphi \in \mathrm{HOM}(\Gamma, G)$
to $\rho\circ\varphi \in \mathrm{HOM}(\Gamma, G/K)$. It is easy to
see that if $\varphi \in \mathrm{HOM}(\Gamma, G)$, then $M^{\langle
\varphi\rangle} = M^{\langle \rho\circ\varphi\rangle}$; however the
actions of $C_{G/K}(\rho\circ\varphi)$ on $M^{\langle
\rho\circ\varphi\rangle}$ and $C_G(\varphi)$ on
$M^{\langle\varphi\rangle}$ may differ.  Moreover, there are
generally more $\Gamma$-sectors of $O$ than $O_{\textit{eff}}$.  In
particular, if $O$ is a connected $n$-dimensional orbifold, only the
nontwisted sector of $O_{\textit{eff}}$ is $n$-dimensional, while each sector
of $O$ corresponding to a $\varphi \in \mathrm{HOM}(\Gamma,K)$ is
$n$-dimensional.

\begin{example}
\label{ex-noneffNewSectors} Let $\Sp^2$ denote the standard unit sphere, and
let $D_6 = \langle a, b : a^3 = b^2 = (ab)^2 = 1 \rangle$ denote the
dihedral group of order $6$.  Define a $D_6$-action on $\Sp^2$ where
$ax = x$ for each $x \in \Sp^2$, and $b$ acts as a rotation through
$\pi$ about a fixed axis.  Then $D_6 \ltimes \Sp^2$ presents a
noneffective orbifold $O$ with $K = \langle a \rangle$ acting
trivially.  The effectivization $O_{\textit{eff}}$ is presented by $\langle b \rangle
\ltimes \Sp^2$. The $\Z$-sectors of $O_{\textit{eff}}$ consist of
$\widetilde{(O_{\textit{eff}})}_{(1)}$, isometric to $O_{\textit{eff}}$, and
$\widetilde{(O_{\textit{eff}})}_{(b)}$, a pair of points with trivial
$\Z_2$-action. However, the $\Z$-sectors of $O$ consist of
$\widetilde{O}_{(1)}$, isometric to $O$, $\widetilde{O}_{(b)}$, a
pair of points with trivial $\Z_2$-action, and
$\widetilde{O}_{(a)}$, the standard unit sphere with trivial $\Z_3$-action.
\end{example}

More generally, if $O$ is presented by a Lie groupoid $\mathcal{G}$,
then the orbifold of $\Gamma$-sectors $\widetilde{O}_\Gamma$ can be
constructed as follows.  Let $\mathcal{S}_\Gamma^\mathcal{G}$ denote
the collection of groupoid homomorphisms $\mathrm{HOM}(\Gamma,
\mathcal{G})$ treating $\Gamma$ as a groupoid with a single object.
Then $\mathcal{S}_\Gamma^\mathcal{G}$ inherits from $\mathcal{G}$
the structure of a union of smooth manifolds (with connected components
of different dimensions) as well as a natural $\mathcal{G}$-action.
We let $\mathcal{G}^\Gamma$ denote the translation groupoid of
$\mathcal{G} \ltimes \mathcal{S}_\Gamma^\mathcal{G}$, and then
$\mathcal{G}^\Gamma$ is a presentation of $\widetilde{O}_\Gamma$.
See \cite{farseanonvanish} for the details of this construction.
Note that if $\mathcal{G} = G\ltimes M$ is a translation groupoid,
then a groupoid homomorphism $\varphi_x\co\Gamma \to \mathcal{G}$
corresponds to a choice of $x \in M$ and a group homomorphism
$\varphi\co\Gamma \to G_x \leq G$.  In particular, the
$\Gamma$-sector associated to $\varphi_x$ using the groupoid
definition is a connected orbifold; it corresponds to the connected
component of $\widetilde{O}_{(\varphi)}$ containing the orbit of
$x$. If $x \in M$ with isotropy group $G_x$ and
$\varphi\co\Gamma\to G_x \leq G$ is a homomorphism with image
contained in $G_x$, then a linear orbifold chart $\{ V_x, G_x, \pi_x
\}$ for $O$ at the orbit $Gx$ induces a linear orbifold chart
$\left\{ V_x^{\langle\varphi\rangle}, C_{G_x}(\varphi),
\pi_x^\varphi \right\}$ for $\widetilde{O}_{(\varphi)}$ at the point
$C_G(\varphi)x$.

\begin{proposition}[\cite{farseagensectors} and \cite{farseanonvanish}]
\label{prop-GammaSectorsIndepPres}
Let $\Gamma$ be a finitely generated discrete group.
A diffeomorphism of orbifolds $O \to O^\prime$ induces a diffeomorphism
$\widetilde{O}_\Gamma \to \widetilde{O^\prime}_\Gamma$ for each finitely
generated group $\Gamma$.  If $O$ is compact, then $\widetilde{O}_\Gamma$
is compact, and in particular consists of a finite number of connected components.
\end{proposition}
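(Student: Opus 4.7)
The plan is to prove the two assertions separately, starting with diffeomorphism invariance and then turning to compactness.

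For the diffeomorphism invariance, I would reduce to the groupoid description $\mathcal{G}^\Gamma$ sketched just before the proposition. By Proposition~3.5 of \cite{pronkscull} cited earlier in Section~\ref{sec-Background}, every diffeomorphism of quotient orbifolds is a composition of equivariant groupoid equivalences, so it suffices to show that a single equivariant equivalence $(\varphi, f)\co G \ltimes M \to H \ltimes N$ induces an equivariant equivalence of $\Gamma$-sectors. Given $\psi\co \Gamma \to G$, three checks suffice: that $f$ restricts to a map $M^{\langle\psi\rangle} \to N^{\langle\varphi\circ\psi\rangle}$, which is immediate from the intertwining identity $f(gx) = \varphi(g)f(x)$; that $\varphi$ restricts to a homomorphism $C_G(\psi) \to C_H(\varphi\circ\psi)$; and that the restricted pair is again an equivariant equivalence. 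The last point follows because the constructions $\psi \mapsto M^{\langle\psi\rangle}$ and $\psi \mapsto C_G(\psi)$ commute with the operations that witness essential surjectivity and full faithfulness of $(\varphi,f)$. This invariance is already essentially proved in \cite{farseagensectors} and \cite{farseanonvanish}, which I would cite.

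For the compactness statement, I would begin by observing that in a quotient presentation $G\ltimes M$ with $G$ a compact Lie group, the orbit projection $M \to G\backslash M = \mathbb{X}_O$ is proper, so compactness of $\mathbb{X}_O$ forces $M$ to be compact. Then for each $\psi \co \Gamma \to G$, the fixed-point set $M^{\langle\psi\rangle}$ is closed in $M$ and hence compact, and $C_G(\psi)$ is closed in $G$ and hence compact, so the sector $C_G(\psi)\ltimes M^{\langle\psi\rangle}$ presents a compact orbifold.

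To bound the number of connected components of $\widetilde{O}_\Gamma$, I would combine two finiteness facts. First, the principal orbit theorem for smooth actions of compact Lie groups on compact manifolds gives only finitely many orbit types in $M$, hence only finitely many $G$-conjugacy classes of isotropy subgroups. Second, any $\psi$ whose sector is nonempty has image in some isotropy group $G_x$, which is finite; since $\Gamma$ is finitely generated and $G_x$ is finite, there are only finitely many such $\psi$ with image in any fixed $G_x$. Combining these and passing to $G$-conjugacy, only finitely many classes $(\psi)$ contribute a nonempty sector. For each such class, $M^{\langle\psi\rangle}$ is a compact closed submanifold (with components potentially of differing dimension) and so has finitely many connected components, which the $C_G(\psi)$-action can only merge into a smaller finite number.

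The main obstacle is the bookkeeping for the first claim: one must verify that different chains of equivariant equivalences linking two quotient presentations yield the same diffeomorphism of sectors up to Morita equivalence. This is not technically deep once one adopts the groupoid description $\mathcal{G}^\Gamma$, but it rests on the fact that $\mathcal{G} \mapsto \mathcal{G}^\Gamma$ is a $2$-functor on proper \'etale Lie groupoids that preserves groupoid equivalences. I would import this fact from \cite{farseagensectors} and \cite{farseanonvanish} rather than reprove it, and use it to complete the invariance part of the proposition.
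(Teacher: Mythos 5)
Your proposal is correct and takes essentially the same route as the paper, which offers no independent proof but simply cites \cite{farseagensectors} and \cite{farseanonvanish} for the functoriality of $\mathcal{G}\mapsto\mathcal{G}^\Gamma$ under groupoid equivalences --- exactly the fact you import for the first claim. Your compactness and finiteness argument (properness of $M\to G\backslash M$, closedness of $M^{\langle\psi\rangle}$ and $C_G(\psi)$, finitely many orbit types plus finitely many homomorphisms from the finitely generated $\Gamma$ into each finite isotropy group) is the standard one the paper leaves implicit, and it is sound.
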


Hence, $\widetilde{O}_\Gamma$ does not depend on the presentation of
$O$.  The construction of $\widetilde{O}_\Gamma$ can be though of as
an ``unraveling" of the singular strata of $O$ into disjoint
orbifolds.  The choice of $\Gamma$ corresponds roughly with the
depth and type of singular strata that are unraveled.  For instance,
if $\Gamma = \Z$, then $\widetilde{O}_\Gamma$ is the inertia
orbifold, consisting of orbifolds that arise as fixed-point subsets
of cyclic groups in $O$.  If $\Gamma = \F_\ell$ is the free group
with $\ell$ generators, then $\widetilde{O}_\Gamma$ corresponds to
the $\ell$-multisectors; see \cite{ademleidaruan}.  Note that the
natural projection $\widetilde{O}_\Gamma \to O$ induced by the
inclusion of each $M^{\langle\varphi\rangle} \to M$ is not usually
injective, even when restricted to a single sector.

Given a metric $g$ on $O$, a local chart $\{ V_x, G_x, \pi_x \}$,
and a homomorphism $\varphi\co\Gamma\to G_x$, the metric $g_x$ on
$V_x$ restricts to a $C_{G_x}(\varphi_x)$-invariant metric on
$V_x^{\langle\varphi\rangle}$, inducing a metric on the associated
$\Gamma$-sector.  If $O$ is presented by $G \ltimes M$, a
choice of corresponding $G$-invariant metric on the smooth manifold
$M$ restricts to each $M^{\langle\varphi\rangle}$ as a
$C_G(\varphi)$-invariant metric, inducing an orbifold metric on each
$\Gamma$-sector as a quotient orbifold.
Using the slice theorem (see e.g. \cite[Theorem 2.3.3]{duistermaatKolk}),
it is easy to see that the restriction of the
$C_G(\varphi)$-invariant metric on $M^{\langle\varphi\rangle}$ to a chart
$V_x^{\langle\varphi\rangle}$ coincides with the restriction of the local metric
$g_x$ to $V_x^{\langle\varphi\rangle}$ so that the metric induced on
$\widetilde{O}_\Gamma$ depends only on the metric $g$ and not on the
presentation of $O$.  Similarly, given an isometry $O \to O^\prime$
between quotient orbifolds, the induced diffeomorphism
$\widetilde{O}_\Gamma \to \widetilde{O^\prime}_\Gamma$ preserves
the corresponding local metrics and hence is itself an isometry
of the corresponding $\Gamma$-sectors.

% XXXXXXXXXXXXXXXXXXXXXXXXXXXXXXXXXXXXXXXXXXXXXXXXXXXXXXXXXXXXXX

\subsection{$\Gamma$-Spectrum of an orbifold}
\label{subsec-BackSpectrum}

For a Riemannian orbifold $O$, we say that a function is smooth if at every point it can be lifted
to a smooth function on a local manifold cover above the point; equivalently, a smooth function is
an invariant smooth function on a presentation of $O$.  We denote the space of all smooth functions on $O$
by $C^{\infty}(O)$.   Since the Laplacian $\Delta$ acting on smooth function on a manifold commutes
with isometries, there is a well-defined action of the Laplacian on $C^{\infty}(O)$, computed by taking
the Laplacian of lifts of smooth functions to local manifold covers.  For any compact, connected
Riemannian orbifold $O$, the eigenvalue spectrum of the Laplacian, denoted $\mathrm{Spec}(O)$ is a
discrete sequence $0=\lambda_0<\lambda_1\leq \lambda_2\leq\cdots\uparrow \infty$, with each eigenvalue
appearing with finite multiplicity; see \cite{chiang}, \cite{dggw}.  We say that two orbifolds are
\textit{isospectral} if they have the same Laplace spectrum.

Let $\Gamma$ be a finitely generated discrete group and let $O$ be a compact orbifold presented by
$G \ltimes M$ where $G$ is a compact Lie group.  Then
\[
    C^\infty( \widetilde{O}_\Gamma )
    =
    \bigoplus\limits_{(\varphi) \in \mathrm{HOM}(\Gamma, G)/G} \;\; \bigoplus\limits_{i=1}^{m_{(\varphi)}}\;\;
    C^\infty(\widetilde{O}_{(\varphi)}^i).
\]
We let $\Delta_{(\varphi)}^i$ denote the corresponding Laplace
operator for each $\Gamma$-sector $\widetilde{O}_{(\varphi)}^i$ and
set
\[
    \Delta_{\Gamma}
    =
    \bigoplus\limits_{(\varphi) \in \mathrm{HOM}(\Gamma, G)/G} \;\; \bigoplus\limits_{i=1}^{m_{(\varphi)}}\;\;
    \Delta^i_{(\varphi)},
\]
so that $\Delta_\Gamma\co C^\infty(\widetilde{O}_\Gamma)\to C^\infty(\widetilde{O}_\Gamma)$.

\begin{definition}\label{defn-spectrum}
Let $O$ be a compact Riemannian orbifold and $\Gamma$ a finitely
generated discrete group. The \emph{$\Gamma$-spectrum of $O$} is the
spectrum of $\Delta_\Gamma$ acting on
$C^{\infty}(\widetilde{O}_\Gamma)$.  In other words
\begin{equation*}
    \mathrm{Spec}_\Gamma(O)=\bigcup\limits_{(\varphi)\in \mathrm{HOM}(\Gamma, G)/G}
    \quad \bigcup\limits_{i=1}^{m_{(\varphi)}}\mathrm{Spec}(\widetilde{O}_{(\varphi)}^i).
\end{equation*}
Two compact Riemannian orbifolds $O$ and $O^\prime$ are
\emph{$\Gamma$-isospectral} if $\mathrm{Spec}_\Gamma(O) =
\mathrm{Spec}_\Gamma(O^\prime)$.
\end{definition}

\begin{remark}
If $O$ is not presented by the quotient of a manifold $M$ by a group
$G$, then the $\Gamma$-spectrum of $O$ can be defined identically
by applying $\mathrm{Spec}$ to the $\Gamma$-sectors of $O$
defined using a groupoid presentation of $O$.  By
Proposition~\ref{prop-GammaSectorsIndepPres}, this definition coincides
with that given in Definition~\ref{defn-spectrum} when $O$ admits a presentation as a quotient.
\end{remark}

Hence, the $\Gamma$-spectrum of $O$ is the usual spectrum of the
$\Gamma$-sectors of $O$.  If $O$ is a connected Riemannian manifold,
then the only $\Gamma$-sector of $O$ is the nontwisted sector isometric to $O$
so the $\Gamma$-spectrum coincides with the usual notion of the Laplace
spectrum.  Similarly, if $\Gamma$ is the trivial group, then
$\mathrm{Spec}_\Gamma(O) = \mathrm{Spec}(O)$.
Note that the multiplicity of $0$ in the
$\Gamma$-spectrum of $O$ corresponds to the number of
$\Gamma$-sectors of $O$.

The usual Laplace spectrum of an orbifold $O$ coincides
with the Laplace spectrum of the associated effective orbifold
$O_{\textit{eff}}$.  This is not the case for the $\Gamma$-spectrum, as there
are generally more $\Gamma$-sectors of $O$ than $O_{\textit{eff}}$; see
Section~\ref{subsec-BackSectors} and in particular
Example~\ref{ex-noneffNewSectors}.  In addition, we consider the following.

\begin{example}
\label{ex-MtrivZ2} Let $M$ be any connected Riemannian manifold, and
let $O$ be presented by $\Z_2 \ltimes M$ where the $\Z_2$-action is
trivial. Then $\widetilde{M}_\Gamma \cong M$ for any $\Gamma$, while
$\widetilde{O}_\Z \cong O \sqcup O$.  It follows that
$\mathrm{Spec}_\Z(M) = \mathrm{Spec}(M)$ while for
$\mathrm{Spec}_\Z(O)$, the multiplicity of each eigenvalue from
$\mathrm{Spec}(M)$ is doubled. Hence, $M$ and $O$ are isospectral
and effectively isometric, though they are not $\Z$-isospectral.

More generally, if a connected orbifold $O$ is presented by $G \ltimes M$ for any finite group $G$ acting trivially,
then the number of connected components of $\widetilde{O}_\Z$ coincides with the number of
conjugacy classes in $G$.  Each connected component of $\widetilde{O}_\Z$ is effectively isometric to $O$,
though the group acting trivially may vary so that they need not be diffeomorphic.
\end{example}

\begin{example}
\label{ex-MtrivZ3D6} Let $M$ be any Riemannian manifold, let $O_1$
denote the quotient of $M$ by a trivial $\Z_3$-action, and let $O_2$
denote the quotient of $M$ by a trivial $D_6$-action where
$D_6 = \langle a, b\mid a^3 = b^2 = (ab)^2=1\rangle$
is the dihedral group with $6$ elements.  Then $O_1$ and $O_2$ are isospectral
and effectively isometric.  Moreover,
$\widetilde{(O_1)}_\Z$ is isometric to $O_1 \sqcup O_1 \sqcup O_1$, and since $D_6$
has $3$ conjugacy classes $(1)$, $(a)$, and $(b)$,
$\widetilde{(O_2)}_\Z \cong D_6\ltimes M \sqcup \Z_3\ltimes M \sqcup \Z_2\ltimes M$ with each
group action trivial.  Therefore, $O_1$ and $O_2$ are also
$\Z$-isospectral.  However, by counting the conjugacy classes of homomorphisms
$\Z^2\to \Z_3$ and $\Z^2\to D_6$, it is easy to see that
$\widetilde{(O_1)}_{\Z^2}$ has $9$ identical connected components
while $\widetilde{(O_2)}_{\Z^2}$ has $8$ connected components,
so that $O_1$ and $O_2$ are not $\Z^2$-isospectral.  Similarly,
$\widetilde{(O_1)}_{\F_2}$ has $9$ connected components
while $\widetilde{(O_2)}_{\F_2}$ has $12$, so that $O_1$ and $O_2$
are not $\F_2$-isospectral.
\end{example}

The above examples illustrate that consideration of noneffective
orbifolds yields trivial examples of orbifolds that, for instance,
are isospectral but not $\Z$-isospectral or are isospectral and
$\Z$-isospectral, but not $\Z^2$-isospectral.  In addition, consider
the following.

\begin{example}
\label{ex-GammaZpTriv} Let $O_1 = G_1 \ltimes M_1$ and $O_2 = G_2
\ltimes M_2$ be any pair of effective isospectral, non-isometric
orbifolds (see examples in Section~\ref{sec-RSW}), and let $p$ be a
prime that does not divide the order of the isotropy group of any
point in $O_1$ or $O_2$.  Then since every homomorphism $\Z_p \to G_i$,
$i = 1,2$ either has empty fixed point set or is trivial, it is easy
to see that $\widetilde{(O_1)}_{\Z_p} = O_1$ and
$\widetilde{(O_2)}_{\Z_p} = O_2$.  Therefore, $O_1$ and $O_2$ are
also $\Z_p$-isospectral.
\end{example}

Hence, many questions about the $\Gamma$-spectrum of a general
orbifold have trivial answers that involve algebraic trickery using
trivial group actions or a choice of $\Gamma$ that leads to no
nontwisted sectors.  We consider the $\Gamma$-spectrum to be
of most interest when applied to effective Riemannian orbifolds and
choices of $\Gamma$ that yield nontrivial sectors.

% XXXXXXXXXXXXXXXXXXXXXXXXXXXXXXXXXXXXXXXXXXXXXXXXXXXXXXXXXXXXXX

\subsection{Elementary $\Gamma$-spectral invariants}
\label{subsec-HeatKer}

Let $O$ be a compact, connected, effective Riemannian orbifold presented as
a quotient orbifold by $G\ltimes M$, and let
$0=\lambda_0<\lambda_1 \leq \lambda_2\leq\cdots\uparrow \infty$
denote the spectrum of $O$.  The \emph{heat trace} of $O$ is defined to be
$\sum_{j=1}^\infty e^{-\lambda_j t}$; see \cite{dggw} and \cite{donnelly}.
By \cite[Theorem 4.8]{dggw}, the heat trace as $t\to 0^+$ admits an asymptotic expansion of the form
\begin{equation}
\label{eq-DGGW4.8}
    (4\pi t)^{-\mathrm{dim}(O)/2}\sum\limits_{j=0}^\infty c_j t^{j/2}
\end{equation}
where $c_0 = \mathrm{vol}(O)$ is the Riemannian volume of $O$.
In particular, the volume and dimension of $O$ are determined by the spectrum; see
also \cite[Theorem 3.2]{farsi}.
It will be useful for us to recall that the asymptotic expansion of the heat trace
in Equation \ref{eq-DGGW4.8} can also be expressed as follows.  Let $\mathcal{S}(O)$
denote the strata of the singular set of $O$ with respect to its
Whitney stratification by orbit types.  Then the asymptotic expansion of the heat trace
can be decomposed into the contributions of the strata as
\begin{equation}
\label{eq-DGGW4.8byStrata}
    (4\pi t)^{-\mathrm{dim}(O)/2}\sum\limits_{k=0}^\infty a_k t^k
        + \sum\limits_{N\in\mathcal{S}(O)} (4\pi t)^{-\mathrm{dim}(N)/2}\sum\limits_{k=0}^\infty b_{k,N} t^k
\end{equation}
where the $a_k$ are the usual heat invariants as in the case of manifolds, and $b_{0,S} \neq 0$
for each $S$.  See \cite{donnelly}, \cite[Lemma 3.3]{GordonRossetti}, and
\cite[Definition 4.7 and Theorem 4.8]{dggw}.

In the case that $O$ is not effective, the heat trace of $O$ coincides with that
of $O_{\textit{eff}}$ since $\mathrm{Spec}(O)=\mathrm{Spec}(O_\textit{eff})$.
However, if $K$ denotes the isotropy group of an effectively
nonsingular point, then $\mathrm{vol}(O) = \mathrm{vol}(O_{\textit{eff}})/|K|$.
To see this, note that if $\{ V_x, G_x, \pi_x \}$ is an orbifold chart, a differential form $\omega$
on $\pi_x(V_x) \subseteq O$ can be defined locally as a $G_x$-invariant differential form on $V_x$,
and the integral of $\omega$ on $\pi_x(V_x)$ is defined to be
\[
    \int\limits_{\pi_x(V_x)} \omega :=
    \frac{1}{|G_x|} \int\limits_{V_x} \pi_x^\ast\omega,
\]
see \cite[p. 34]{ademleidaruan}.  If $p = \pi_x(x)$ is an effectively nonsingular
point in $O$, then $G_x \cong K$.  For an arbitrary point,
if $G_x^{\textit{eff}}$ denotes the isotropy group of the point
corresponding to $\pi_x(x)$ in $O_{\textit{eff}}$, then $|G_x^{\textit{eff}}| = |G_x|/|K|$
so that this integral differs from the
integral of the corresponding $\omega$ on $O_{\textit{eff}}$ by a factor of $|K|$.
Hence, if $O$ is not effective, then the volume can be determined from the spectrum along
with the order of the group $K$ acting trivially, but cannot be determined from the spectrum
alone.

For a finitely generated discrete group $\Gamma$, the \emph{$\Gamma$-heat trace of $O$}
is defined to be the heat trace of $\widetilde{O}_\Gamma$.  It is evidently
given by the sum of the heat traces of the sectors of $\widetilde{O}_\Gamma$.
Specifically, for each conjugacy class $(\varphi) \in \mathrm{HOM}(\Gamma,G)/G$
and each $i = 1, \ldots, m_{(\varphi)}$, let
$0=\lambda_0((\varphi),i)<\lambda_1((\varphi),i) \leq \lambda_2((\varphi),i)\leq\cdots\uparrow \infty$
denote the spectrum of the closed orbifold $\widetilde{O}_{(\varphi)}^i$,
the corresponding connected component of $C_G(\varphi)\ltimes M^{\langle\varphi\rangle}$.
Let $H_{(\varphi), i}(t) = \sum_{j=0}^\infty e^{-\lambda_j((\varphi),i) t}$ denote the
corresponding heat trace of $\widetilde{O}_{(\varphi)}^i$.  Then the $\Gamma$-heat trace
of $O$ is given by
\[
    \sum\limits_{(\varphi)\in \mathrm{HOM}(\Gamma,G)/G}
    \quad \sum\limits_{i=1}^{m_{(\varphi)}} H_{(\varphi), i}(t).
\]
Then the $\Gamma$-heat trace is asymptotic as $t\to 0^+$ to
\[
    \sum\limits_{(\varphi)\in \mathrm{HOM}(\Gamma,G)/G}
    \quad \sum\limits_{i=1}^{m_{(\varphi)}}
    (4\pi t)^{-\mathrm{dim}(\widetilde{O}_{(\varphi)}^i)/2}\sum\limits_{j=0}^\infty
    c_j((\varphi),i) t^{j/2},
\]
where the $c_j((\varphi),i)$ are the coefficients of the asymptotic expansion of the heat trace
of $\widetilde{O}_{(\varphi)}^i$.

If $O$ is effective, then the dimension of the nontwisted sector $\widetilde{O}_{(1)} \cong O$
is strictly larger than the dimension of each twisted sector.  Therefore, the lowest-degree
term in the asymptotic expansion of the heat trace is
$(4\pi t)^{-\mathrm{dim}(O)/2}\mathrm{vol}(O)$, with no contributions from the twisted
sectors.  In particular, the lowest degree term of the asymptotic expansions of the
$\Gamma$-heat trace and ordinary heat trace coincide.

Suppose on the other hand that $O$ is not effective so that a nontrivial finite subgroup
$K \unlhd G$ acts trivially on $M$.  Then the sectors $\widetilde{O}_{(\varphi)}$
corresponding to $\varphi$ with image contained in $K$ have dimension equal to
$\mathrm{dim}(O)$, while all other sectors have dimension strictly less
than $\mathrm{dim}(O)$.  Then as $G$ acts on $\mathrm{HOM}(\Gamma,K)$,
it follows that the lowest-degree term in the
asymptotic expansion of the heat trace is
\[
    (4\pi t)^{-\mathrm{dim}(O)/2}
    \sum\limits_{(\varphi)\in \mathrm{HOM}(\Gamma,K)/G}
    \mathrm{vol}\left( C_G(\varphi)\ltimes M \right),
\]
where the volumes in the sum need not be of connected orbifolds.

These observations yield the following.

\begin{proposition}
\label{prop-VolDimHTrace} Let $O$ be a compact, connected, effective
Riemannian orbifold. Then the volume and dimension of $O$ are
determined by the $\Gamma$-spectrum for any finitely generated
discrete group $\Gamma$.  If $O$ is not effective, then the
dimension of $O$ is determined by the $\Gamma$-spectrum for any
finitely generated discrete group $\Gamma$.
\end{proposition}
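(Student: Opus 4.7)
The plan is to read the statement directly off the asymptotic expansion of the $\Gamma$-heat trace that has already been assembled in the preceding discussion. First I would observe that, since the $\Gamma$-spectrum is by definition the spectrum of the Laplacian on $\widetilde{O}_\Gamma$, it determines the $\Gamma$-heat trace $\sum_{(\varphi),i} H_{(\varphi),i}(t)$ via the classical identity $H(t)=\sum_j e^{-\lambda_j t}$. Hence any quantity extractable from the short-time asymptotic expansion of that heat trace is a $\Gamma$-spectral invariant, and it suffices to extract $\dim(O)$ and (in the effective case) $\mathrm{vol}(O)$ from the leading term.

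For the effective case, I would argue that the nontwisted sector contributes $(4\pi t)^{-\dim(O)/2}\mathrm{vol}(O) + O(t^{1-\dim(O)/2})$, while every twisted sector $\widetilde{O}_{(\varphi)}^i$ has strictly smaller dimension. The latter is the key geometric input: if $\varphi\co\Gamma\to G$ is a nontrivial conjugacy class with $M^{\langle\varphi\rangle}\neq\emptyset$, then because $O$ is effective the image of $\varphi$ acts nontrivially on $M$ in every local slice, so $M^{\langle\varphi\rangle}$ is a proper closed submanifold of $M$ and hence $\dim(\widetilde{O}_{(\varphi)}^i)<\dim(O)$. Consequently the most singular power of $t$ appearing in the asymptotic expansion of the $\Gamma$-heat trace is exactly $t^{-\dim(O)/2}$, and its coefficient is exactly $(4\pi)^{-\dim(O)/2}\mathrm{vol}(O)$. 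Reading off this power and coefficient recovers $\dim(O)$ and $\mathrm{vol}(O)$ from $\mathrm{Spec}_\Gamma(O)$.

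For the noneffective case, I would use the decomposition already noted in the excerpt: letting $K\unlhd G$ be the kernel of the action, sectors $\widetilde{O}_{(\varphi)}$ with $\varphi(\Gamma)\subseteq K$ have full dimension $\dim(O)$, while all other sectors drop dimension by the same effectiveness argument applied to $G/K$ acting on $M$. The leading term of the $\Gamma$-heat trace is therefore still of order $t^{-\dim(O)/2}$ (its coefficient being a sum of volumes of the top-dimensional sectors), so the exponent still determines $\dim(O)$, even though the coefficient is no longer $\mathrm{vol}(O)$ alone.

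The main obstacle, if there is one, is simply being careful that no twisted sector can sneak in at the top dimension in the effective case; everything else is a matter of comparing powers of $t$ in the asymptotic expansion already established via \cite[Theorem 4.8]{dggw}. Once that dimension comparison is in place, the proposition follows by extracting the leading coefficient and exponent of the leading singularity of the $\Gamma$-heat trace.
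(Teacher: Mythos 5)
Your proposal is correct and follows essentially the same route as the paper: the authors likewise read $\dim(O)$ and $\mathrm{vol}(O)$ off the leading term of the $\Gamma$-heat trace, noting that in the effective case every twisted sector has strictly smaller dimension than the nontwisted sector (so the leading term is $(4\pi t)^{-\dim(O)/2}\mathrm{vol}(O)$), while in the noneffective case the sectors with $\varphi(\Gamma)\subseteq K$ retain full dimension and contribute a sum of volumes, so only the dimension survives. Your added justification that effectiveness forces $M^{\langle\varphi\rangle}$ to be a proper submanifold of lower dimension is exactly the (implicit) reasoning the paper relies on.
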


Note that as in the case of the ordinary spectrum, the asymptotic
expansion of the heat trace is a strictly coarser invariant than the
spectrum itself.  See Section~\ref{subsubsec-3.7}
for an example of orbifolds for which the
asymptotic expansions of the $\Gamma$-heat traces coincide for every
group $\Gamma$ though the orbifolds are not $\Gamma$-isospectral for every
$\Gamma$.

\bigskip
% XXXXXXXXXXXXXXXXXXXXXXXXXXXXXXXXXXXXXXXXXXXXXXXXXXXXXXXXXXXXXX
% XXXXXXXXXX    SECTION: KNOWN EXAMPLES
% XXXXXXXXXXXXXXXXXXXXXXXXXXXXXXXXXXXXXXXXXXXXXXXXXXXXXXXXXXXXXX

\section{Isospectral, nonisometic orbifolds and their $\Gamma$-spectra}
\label{sec-RSW}

In this section, we consider the $\Gamma$-spectra of examples of
isospectral, nonisometric orbifolds that have been given in the
literature.  Along with using these examples to illustrate features
of the $\Gamma$-spectrum, we will see that in many examples, the
$\Z$-spectrum is able to distinguish between isospectral pairs;
this is the case for the examples recalled in Sections
\ref{subsec-ssw} through \ref{subsec-rswFlat}.
We will also indicate known examples of nonisometric
orbifolds with nontrivial singular set that are $\Gamma$-isospectral
for every choice of $\Gamma$ in Section~\ref{subsec-shams}.
Throughout this section, we restrict our attention to pairs of
orbifolds with nontrivial singular sets because if $O$ and $O^\prime$ are isospectral
manifolds, then they are automatically $\Gamma$-isospectral for every choice of
$\Gamma$.

% XXXXXXXXXXXXXXXXXXXXXXXXXXXXXXXXXXXXXXXXXXXXXXXXXXXXXXXXXXXXXX

\subsection{Examples of Shams-Stanhope-Webb}
\label{subsec-ssw}

In \cite{ssw}, given an odd prime $p$ and an integer $m\geq 1$, Shams, Stanhope,
and Webb construct a family $\{ O_i : i = 0, \ldots, m\}$ of pairwise isospectral,
nonisometric orbifolds.  As we will see below, no $O_i$ and $O_j$ with $i \neq j$
are $\Z$-isospectral.  These examples illustrate how one can conclude
that orbifolds are not $\Z$-isospectral without determining the $\Z$-sectors
explicitly.

The orbifolds $O_i$ are given by quotients of the
standard unit sphere $\Sp^{p^{3m}-1}$ in $\R^{p^{3m}}$ by subgroups of the permutation group $S_{p^{3m}}$
acting on a basis for $\R^{p^{3m}}$.   Specifically, let $H$ denote the mod-$p$ Heisenberg group and let
$E = (\Z_p)^3$. Then $O_i$ is given by $H_i\ltimes \Sp^{p^{3m}-1}$ where $H_i = H^i\times E^{m-i}$,
realized as a subgroup of $S_{p^{3m}}$.

To see that no pair $O_i$ and $O_j$ are $\Z$-isospectral for $i\neq
j$, we first consider the case $m = 1$, i.e. the two orbifolds $O_0
= E \ltimes \Sp^{p^3-1}$ and $O_1 = H \ltimes \Sp^{p^3-1}$. To
compute the fixed-point sets, note that each nontrivial element $a$
of $E$ has order $p$ so that the left action of $a$ partitions $E$
into $p^2$ orbits of size $p$. Given a standard basis vector $e_i$
of $\R^{p^3}$, let $e_i^a = \sum_{k=0}^{p-1} a^k e_i$ denote the
average of $e_i$ over the action of $\langle a \rangle$.  Note that
$e_i^a = e_j^a$ if and only if $e_i$ and $e_j$ are in the same
orbit, and hence there are $p^2$ distinct $e_i^a$ corresponding to
the $p^2$ $\langle a\rangle$-orbits in $E$.  A vector in $\R^{p^3}$
is fixed by $a$ if and only if it is a linear combination of the
$e_i^a$, so that $(\R^{p^3})^a$ is a subspace of dimension $p^2$.  Then
$(\Sp^{p^3-1})^a$ is $(p^2 - 1)$-dimensional, and in particular is a
subsphere of $\Sp^{p^3-1}$ of positive dimension, hence connected.
As $E$ and $H$ are almost conjugate, the same holds for true for the
nontrivial elements of $H$.

Since $E$ is abelian, the $\Z$-sectors $\widetilde{(O_0)}_\Z$
consist of the nontwisted sector $O_0$ as well as $p^3 - 1$ twisted
sectors of the form $E \ltimes \Sp^{p^2-1}$. Thus,
$\widetilde{(O_0)}_\Z$ has $p^3$ connected components. On the other
hand, since $H$ is not abelian, $H$ contains strictly fewer than
$p^3$ conjugacy classes and therefore $\widetilde{(O_1)}_\Z$ has
strictly fewer than $p^3$ connected components, each of which can be
represented by a quotient of $\Sp^{p^2-1}$ by a subgroup of $H$.  We
conclude that $O_0$ and $O_1$ are not $\Z$-isospectral since the
multiplicities of $0$ in their $\Z$-spectra do not coincide.

For general $m$, note that the bijection from $\mathrm{HOM}(\Z, A\times B)$
to $\mathrm{HOM}(\Z,A)\times\mathrm{HOM}(\Z,B)$ given by
$\varphi \mapsto (\pi_A\circ\varphi,\pi_B\circ\varphi)$, where $\pi_A$
and $\pi_B$ denote the respective projection homomorphisms, is equivariant with
respect to conjugation by $A\times B$ and hence induces a bijection between
$\mathrm{HOM}(\Z, A\times B)/(A\times B)$
and $\mathrm{HOM}(\Z,A)/A\times\mathrm{HOM}(\Z,B)/B$.  Hence, the above
argument demonstrates that for $i < j$, the $\Z$-sectors of
$H_i \ltimes \Sp^{p^{3m}-1}$ have strictly fewer connected components than the
$\Z$-sectors of $H_j \ltimes \Sp^{p^{3m}-1}$.
Thus, we conclude that no pair of these orbifolds is $\Z$-isospectral.

% XXXXXXXXXXXXXXXXXXXXXXXXXXXXXXXXXXXXXXXXXXXXXXXXXXXXXXXXXXXXXX

\subsection{Homogeneous space examples of Rossetti-Schueth-Weilandt}
\label{subsec-rswHS}

In \cite{rsw}, Rossetti, Schueth, and Weilandt describe several pairs of isospectral, nonisometric orbifolds, demonstrating in particular that isospectral orbifolds need not have the same maximal isotropy order.  The first three examples that they give (Examples 2.7--9) are biquotients of $SO(6)$.  In each of these
examples, the resulting isospectral orbifold pairs are not $\Z^\ell$- or $\F_\ell$-isospectral for any
$\ell \geq 1$.  We will describe this computation for their Examples 2.7 and 2.9 explicitly in Sections
\ref{subsubsec-rswSO3} and \ref{subsubsec-rswSO5} to illustrate the computation of the $\Gamma$-sectors
and $\Gamma$-spectrum.
Note that the computation for their Example 2.8 is similar to that carried out for Example 2.7 below;
the resulting twisted sectors of the two orbifolds have a common Riemannian cover and can be seen to be
not isospectral by an application of \cite[Proposition 3.4(ii)]{GordonRossetti}.

In order to describe these examples, let $a_{i_1i_2\cdots i_m}$ to denote a square diagonal $6\times 6$
matrix with $-1$ in the $i_1, i_2, \dots, i_m$ positions and $1$ everywhere else.  We recall from \cite[Example 2.4]{rsw} the groups
\begin{equation}
\label{eq-RSWGp1}
    K_1 = \{I, -I, a_{12}, a_{13}, a_{23}, a_{1456}, a_{2456}, a_{3456}\}
\end{equation}
and
\begin{equation}
\label{eq-RSWGp2}
    K_2=\{I, -I, a_{12}, a_{34}, a_{56}, a_{1234}, a_{1256}, a_{3456}\}.
\end{equation}

The orbifold pairs in Examples 2.7-9 in \cite{rsw} are of the form $O_1=K_1\ltimes (SO(6)/H)$ and $O_2=K_2\ltimes (SO(6)/H)$ where $H$ is a choice of subgroup in $SO(6)$.  They are isospectral by Sunada's theorem.  (See Section~\ref{sec-Sunada} below for a discussion of Sunada's theorem.)

%XXXXXXXXXXXXXXXXXXXXXXXX

\subsubsection{$H \cong SO(3)$, \cite[Example 2.7]{rsw}}
\label{subsubsec-rswSO3}
In this case, $H \cong SO(3)$ is chosen to be the subgroup of matrices in $3\times 3$
blocks of the form
\[
    \left[\begin{array}{cc}
        A           &   0       \\
        0           &   I_3
    \end{array}\right]
\]
with $A\in SO(3)$, so that $M = SO(6)/H$ is the Steifel manifold $V_{6,3}$ of
orthonormal $3$-frames in $\R^6$.  In particular, for $b \in SO(6)$,
the coset $bH$ corresponds to the frame given by the last three
columns of $b$. With a choice of biinvariant metric on $SO(6)$, the
orbifolds $O_1 = K_1 \ltimes M$ and $O_2 = K_2 \ltimes M$ are isospectral with different maximal isotropy orders.

To compute the $\Z$-sectors of $O_1$ and $O_2$, note that an element
$bH \in SO(6)/H$ is fixed by $a_{i_1i_2\cdots i_m}$ (acting on the left) if and only if
the last three columns of $b$ have the zero rows in positions $i_1,i_2,\dots, i_m$.  It is easy to
see that
\[
    M^{a_{1456}} = M^{a_{2456}} = M^{a_{3456}} =
    M^{a_{1234}} = M^{a_{1256}} = M^{-I} = \emptyset,
\]
because if $bH$ were fixed by any of these elements of $K_1$ or $K_2$,
then the last three columns of $b$ would be a linearly independent set
of three vectors in a subspace of dimension zero or two.
Thus the $\Z$-sectors of $O_1$ and $O_2$ both consist of
four connected components.

In the case of $O_1$, besides the nontwisted sector, the three
$\Z$-sectors
\[
    \widetilde{(O_1)}_{(a_{12})} \cong \widetilde{(O_1)}_{(a_{13})} \cong \widetilde{(O_1)}_{(a_{23})}
\]
are isometric; we describe $\widetilde{(O_1)}_{(a_{12})}$ in detail.
The fixed point set $M^{a_{12}}$ consists of cosets $bH$ corresponding to elements $b \in SO(6)$
of the form
\[
    \left[\begin{array}{cc}
        \ast_{2\times 3}    &   0_{2\times 3}       \\
        \ast_{4\times 3}    &   v_{4\times 3}
    \end{array}\right].
\]
Here, $v_{4\times 3}$ is an orthonormal $3$-frame in $\R^4$ that depends only on the coset $bH$,
while $\ast$ indicates entries of $b$ that depend on the choice of representative from $bH$.
Since $K_1$ is abelian, the centralizer of any element is the
entire group, so $\widetilde{(O_1)}_{(a_{12})}\cong K_1\ltimes
M^{a_{12}}$.

In order to understand the action of $K_1$ on $M^{a_{12}}$, we first note that $K_1\cong (\Z_2)^3 \cong \langle a_{12}\rangle\oplus\langle a_{13}\rangle\oplus \langle a_{1456}\rangle$.   Thus the action of $K_1$ corresponds to a trivial $\Z_2$-action generated by $a_{12}$ as well as a nontrivial $\Z_2\oplus\Z_2$-action generated by $a_{13}$ and $a_{1456}$ on the
rows indexed $3456$ of $v_{4\times 3}$.  The action of
$a_{13}$ fixes the set of cosets for which the first row of
$v_{4\times 3}$ vanishes, and therefore the fixed point set of $a_{13}$ is isometric to $SO(3)$.  All other elements of $\Z_2\oplus\Z_2$ have no fixed points because, as above, it is impossible to have three linearly independent vectors in a subspace of dimension less than three.

The remaining twisted sectors
$\widetilde{(O_1)}_{(a_{13})}$ and $\widetilde{(O_1)}_{(a_{23})}$
are identical up to permuting rows.

In the case of $O_2$, the three twisted $\Z$-sectors are
\[
    \widetilde{(O_2)}_{(a_{12})} \cong \widetilde{(O_2)}_{(a_{34})} \cong \widetilde{(O_2)}_{(a_{56})}
\]
and are again isometric simply by permuting rows, so we focus our attention on  $\widetilde{(O_2)}_{(a_{12})}$.  The fixed point set $M^{a_{12}}$ is as in the
case of $O_1$, and again, since $K_2$ is abelian, $\widetilde{(O_2)}_{(a_{12})}\cong K_2\ltimes M^{a_{12}}$.  In this case, $K_2\cong (\Z_2)^3\cong \langle a_{12}\rangle\oplus\langle a_{34}\rangle\oplus\langle a_{56}\rangle$ so the action of $K_2$ corresponds to a
trivial $\Z_2$-action generated by $a_{12}$ as well as a $\Z_2\oplus\Z_2$-action generated
by $a_{34}$ and $a_{56}$ on the rows indexed $3456$ of $v_{4\times
3}$.  As above, since there cannot be three linearly independent vectors in a subspace of dimension less than three, the
$\Z_2\oplus\Z_2$-action is free.  We conclude therefore that
$\widetilde{(O_2)}_{(a_{12})}$ (and thus $\widetilde{(O_2)}_{(a_{34})}$ and $\widetilde{(O_2)}_{(a_{56})}$)  is a smooth manifold.

To see that $O_1$ and $O_2$ are not $\Z$-isospectral, note that the
$\Z$-spectrum of each $O_i$ is the union
\[
    \mathrm{Spec}(O_i) \cup 3\mathrm{Spec}\left(\widetilde{(O_i)}_{(a_{12})}\right),
\]
where the $3$ indicates that the multiplicity of each element of the spectrum
is multiplied by $3$. Since $O_1$ and $O_2$ are isospectral, it is sufficient to show that
$\widetilde{(O_1)}_{(a_{12})}$ and $\widetilde{(O_2)}_{(a_{12})}$
are not isospectral.  The effective orbifold associated
to $\widetilde{(O_1)}_{(a_{12})}$ is a $6$-dimensional orbifold with
$3$-dimensional singular set, while the effective orbifold
associated to $\widetilde{(O_2)}_{(a_{12})}$ is a smooth
$6$-dimensional manifold.  Hence, it follows from \cite[Theorem 5.1]{dggw}
that they are not isospectral.

It is also of interest to consider the $\Gamma$-sectors of the
orbifolds $O_1$ and $O_2$ for other free groups $\Gamma$.  Since
$K_1$ and $K_2$ are abelian, the
$\Z^2$-sectors and the $\F_2$-sectors coincide; see \cite{dsst2orb}.
The fact that the $\Z^2$-sectors coincide with the
$\Z$-sectors of the $\Z$-sectors computed above (see \cite[Theorem
3.1]{farseageneuler}) makes it straightforward to compute the $\Z^2$-sectors of
$O_1$ and $O_2$.

For $O_1$, from the nontwisted $\Z$-sector $O_1$, a computation identical to the one above gives four
$\Z^2$-sectors: one copy of $O_1$ and three sectors that are isometric to
$\widetilde{(O_1)}_{(a_{12})}\cong K_1\ltimes M^{a_{12}}$.  From each of the three twisted $\Z$-sectors
$\widetilde{(O_1)}_{(a_{12})}$ we get two copies of $\widetilde{(O_1)}_{(a_{12})}$ (correponding to
homomorphisms $\Z\to K_1$ with image $I$ and $\langle a_{12}\rangle$ respectively) and two copies of
$K_1\ltimes SO(3)$ (corresponding to homomorphisms $\Z\to K_1$ with images $\langle a_{13}\rangle$ and $\langle a_{23}\rangle$ respectively).  Thus in total, the $\Z^2$-sectors of $O_1$ are given by

\begin{itemize}
\item   the nontwisted sector isometric to $O_1$;
\item   nine isometric copies of $\widetilde{(O_1)}_{(a_{12})}$; and
\item   six isometric copies of $K_1  \ltimes SO(3)$.
\end{itemize}

Similarly, for $O_2$, from the nontwisted $\Z$-sector $O_2$ we obtain one copy of $O_2$ and three sectors that are isometric to $\widetilde{(O_2)}_{(a_{12})}\cong K_2\ltimes M^{a_{12}}$.  From each of the three twisted $\Z$-sectors $\widetilde{(O_2)}_{(a_{12})}$ we get two copies of $\widetilde{(O_2)}_{(a_{12})}$, corresponding to homomorphisms $\Z\to K_2$ with image $I$ and $\langle a_{12}\rangle$ respectively. Thus the $\Z^2$-sectors of $O_2$ are

\begin{itemize}
\item   the nontwisted sector isometric to $O_2$; and
\item   nine isometric copies of $\widetilde{(O_2)}_{(a_{12})}$.
\end{itemize}

Since the number of connected components of the $\Z^2$-sectors of $O_1$ and $O_2$ respectively do not
coincide, the multiplicities of $0$ in the $\Z^2$-spectra of $O_1$ and $O_2$ do not
coincide, thus $O_1$ and $O_2$ are not
$\Z^2$-isospectral.

As the isotropy group of each point in $O_1$ and $O_2$ is abelian
and can be generated by two elements, the $\Z^\ell$-sectors of each
$O_i$ for $\ell > 2$ will simply yield multiple copies of the
$\Z^2$-sectors of $O_i$.  In general, by counting homomorphisms $\Z^{\ell}\to K_i$ whose images have nontrivial fixed point sets, we conclude that $\widetilde{(O_1)}_{\Z^\ell}$
consists of $4^\ell$ connected components while
$\widetilde{(O_2)}_{\Z^\ell}$ consists of $3 \cdot 2^\ell - 2$
connected components, so these orbifolds are not
$\Z^\ell$-isospectral for any positive $\ell$.

% XXXXXXXXXXXXXXXXXXXXXXXX

\subsubsection{$H \cong SO(5)$ \cite[Example 2.9]{rsw}}
\label{subsubsec-rswSO5}
In this case, $H \cong SO(5)$ is chosen to be the subgroup of matrices of the form
\[
    \left[\begin{array}{cc}
        A           &   0       \\
        0           &   1
    \end{array}\right]
\]
with $A \in SO(5)$ so that $M = SO(6)/H$ is isometric to the
standard sphere $\Sp^5$ in $\R^6$. The twisted $\Z$-sectors of the
orbifold $O_1 = K_1\ltimes M$ are given by
\[
    \widetilde{(O_1)}_{(a_{12})} \cong \widetilde{(O_1)}_{(a_{13})} \cong \widetilde{(O_1)}_{(a_{23})},
\]
each isometric to the standard sphere $\Sp^3$ with trivial
$\Z_2$-action generated by $a_{12}$ and $\Z_2\oplus\Z_2$-action generated by $a_{13}$ and
$a_{1456}$ in coordinates indexed $3456$ for $\R^4$, as well as
\[
    \widetilde{(O_1)}_{(a_{1456})} \cong \widetilde{(O_1)}_{(a_{2456})} \cong \widetilde{(O_1)}_{(a_{3456})},
\]
each isometric to $\Sp^1$ with trivial $\Z_2$-action generated by $a_{1456}$ and
$\Z_2\oplus\Z_2$-action generated by $a_{12}$ and $a_{13}$ in coordinates
indexed $23$ for $\R^2$.

Similarly, the twisted $\Z$-sectors of $O_2 = K_2\ltimes M$ are
given by
\[
    \widetilde{(O_2)}_{(a_{12})} \cong \widetilde{(O_2)}_{(a_{34})} \cong \widetilde{(O_2)}_{(a_{56})},
\]
each isometric to the standard sphere $\Sp^3$ with trivial
$\Z_2$-action generated by $a_{12}$ and $\Z_2\oplus\Z_2$-action generated by $a_{34}$ and
$a_{56}$ in coordinates indexed $3456$ for $\R^4$, as well as
\[
    \widetilde{(O_2)}_{(a_{1234})} \cong \widetilde{(O_2)}_{(a_{1256})} \cong \widetilde{(O_2)}_{(a_{3456})},
\]
each isometric to $\Sp^1$ with trivial $\Z_2\oplus\Z_2$-action generated by $a_{12}$ and $a_{34}$, and
$\Z_2$-action generated by $a_{56}$.

It is possible to compute the small values of the
$\Z$-spectrum directly using the fact that the eigenfunctions of the
Laplacian on a standard sphere are given by the restrictions of the
homogeneous harmonic polynomials on $\R^n$; see \cite{bgm}.  It follows
that the eigenfunctions on an orbifold space form are given by the
invariant homogeneous harmonic polynomials; see \cite{rsw}.  By computing bases for the $k^{th}$ eigenspaces of
$\Sp^1$ and $\Sp^3$ and checking invariance directly, one computes
that the first elements of the spectrum of
$\widetilde{(O_1)}_{(a_{1456})}$ are $0$ and $4$, both with a
multiplicity of $1$.  The next eigenvalue of  $\widetilde{(O_1)}_{(a_{1456})}$ must be at least 9.  The first eigenvalue of
$\widetilde{(O_1)}_{(a_{12})}$ is $0$ with a multiplicity of $1$ and
the next eigenvalue is $8$.

On the other hand, the first
elements of the spectrum of $\widetilde{(O_2)}_{(a_{1234})}$ are $0$
with a multiplicity of $1$ and $4$ with a multiplicity of $2$, and the next eigenvalue is at least 9.
The first eigenvalue of $\widetilde{(O_2)}_{(a_{12})}$ is $0$
with a multiplicity of $1$, and the next eigenvalue is $8$.

It
follows that  $\Z$-spectrum of $O_1$ is given by
\[
    \mathrm{Spec}(O_1) \cup \{ 0_6, 4_3, \cdots \}
\]
with subscripts indicating multiplicity, while the $\Z$-spectrum of
$O_2$ is given by
\[
    \mathrm{Spec}(O_2) \cup \{ 0_6, 4_6, \cdots \}.
\]
Since $\mathrm{Spec}(O_1) = \mathrm{Spec}(O_2)$, the multiplicity of $4$ cannot coincide in the $\Z$-spectra of $O_1$ and $O_2$, and hence they
are not $\Z$-isospectral.

Similarly, because $\widetilde{(O_1)}_{(a_{12})}$, $\widetilde{(O_1)}_{(a_{1456})}$, and
$\widetilde{(O_2)}_{(a_{12})}$ all have $6$ $\Z$-sectors, while
$\widetilde{(O_2)}_{(a_{1234})}$ has $4$ $\Z$-sectors, it is easy to see that
$\widetilde{(O_1)}_{\Z^\ell} = \widetilde{(O_1)}_{\F_\ell}$ has more connected
components than $\widetilde{(O_2)}_{\Z^\ell} = \widetilde{(O_2)}_{\F_\ell}$, so that
$O_1$ and $O_2$ are not $\Z^\ell$- or $\F_\ell$-isospectral for any $\ell$.

% XXXXXXXXXXXXXXXXXXXXXXXXXXXXXXXXXXXXXXXXXXXXXXXXXXXXXXXXXXXXXX

\subsection{Flat space examples of Rossetti-Schueth-Weilandt}
\label{subsec-rswFlat} In addition to the examples given above,
Rossetti, Scheuth, and Weilandt also describe pairs of isospectral,
nonisometric orbifolds given by quotients of $\R^3$ with its
standard, flat metric by pairs of crystallographic groups $K_1$ and
$K_2$, i.e. groups of isometries of $\R^3$ that act properly
discontinuously with compact quotients.  In these cases, the
resulting orbifolds are shown to be isospectral using either Sunada's theorem or an
eigenspace dimension counting formula \cite[Theorem 3.1]{rsw}; see
also \cite{MiatRos1, MiatRos2}. In every case, the resulting
orbifolds are not $\Z^\ell$- or $\F_\ell$-isospectral for $\ell \geq 1$.
This follows from the fact that the collections of twisted sectors are not
isospectral, similar to the examples in Section~\ref{subsec-rswHS}.
However, because the singular sets of these orbifolds are described in detail in
\cite{rsw}, the sectors can be computed directly from these
descriptions using Remark \ref{rem-SingSetExplicit} below.
We will briefly describe the sectors and consequence
for Examples 3.3, 3.5, and 3.7 of \cite{rsw} in order to illustrate
this approach; note that Examples 3.9 and 3.10 can be treated identically.
In each case, $O_i = K_i\ltimes \R^3$ for $i = 1, 2$.  See \cite{weilandt} for more details.

\begin{remark}
\label{rem-SingSetExplicit}
Let $O$ be a quotient orbifold represented by $G \ltimes M$, $\Gamma$ a finitely generated discrete group,
and $\varphi\co\Gamma\to G$ a homomorphism such that $M^{\langle\varphi\rangle} \neq \emptyset$.
Recall that a linear orbifold chart $\{ V_x, G_x, \pi_x \}$ for $O$ at the orbit $Gx$ induces a chart
$\left\{ V_x^{\langle\varphi\rangle}, C_{G_x}(\varphi), \pi_x^\varphi \right\}$ for $\widetilde{O}_{(\varphi)}$ at the point
$C_G(\varphi)x$.  Hence, the $\Gamma$-sectors can be determined locally in terms of an orbifold chart and then patched
together, which is often convenient when the singular set and isotropy groups of $O$ are known explicitly.
We use this fact when computing sectors in each of the examples in this section.
\end{remark}

% XXXXXXXXXXXXXXXXXXXXXXXX

\subsubsection{\cite[Example 3.3]{rsw}}
\label{subsubsec-3.3}
In this example, the singular sets of the orbifolds $O_1$ and $O_2$ each
consist of a disjoint union of circles.  For each $\Gamma = \Z^\ell$
or $\F_\ell$ with $\ell \geq 1$, the $\Gamma$-sectors of $O_1$ and $O_2$
have a different number of connected components so that $O_1$ and $O_2$
are not $\Gamma$-isospectral.

The singular set of $O_1$ consists of three
circles $\Sp^1$ of length $1$ with isotropy groups $\Z_4$, $\Z_4$,
and $\Z_2$, respectively.  A linear orbifold chart for a point contained in a singular circle
with isotropy group $\Z_k$ is of the form $\{ V_x, G_x, \pi_x \}$ where $V_x$
is diffeomorphic to $\R^3$ and $G_x \cong \Z_k$ acts as rotations about an axis.
Hence the corresponding charts for $\widetilde{(O_1)}_\Z$ are parameterized
by homomorphisms $\varphi\co\Z\to\Z_k$, where the trivial homomorphism
yields a chart for the nontwisted sector and each nontrivial homomorphism
yields a chart of the form $\left\{ V_x^{\langle\varphi\rangle}, C_{G_x}(\varphi), \pi_x^\varphi \right\}$
where $V_x^{\langle\varphi\rangle}$ is a line on which $C_{G_x}(\varphi) \cong \Z_k$ acts trivially.
These charts patch together to describe a neighborhood of the circle in the nontwisted sector as well as
one circle with trivial $\Z_k$-action for each nontrivial homomorphism $\varphi\co\Z\to\Z_k$.

It follows that the twisted $\Z$-sectors
of $O_1$ consist of seven copies of $\Sp^1$ with length $1$, six with
trivial $\Z_4$-action and one with trivial $\Z_2$-action.  The
singular set of $O_2$, on the other hand, consists of four copies of
$\Sp^1$, each with $\Z_2$-isotropy, two of length $2$ and two of
length $1$.  Therefore, the twisted $\Z$-sectors of $O_2$ consist of
four copies of $\Sp^1$ with trivial $\Z_2$-action in pairs of length
$2$ and $1$.  As the numbers of connected components do not
coincide, $O_1$ and $O_2$ are not $\Z$-isospectral.  Similarly, for
$\ell > 1$, $O_1$ and $O_2$ are easily seen to not be $\Z^\ell$- or
$\F_\ell$-isospectral by counting numbers of nontrivial homomorphisms
from $\Z^{\ell}$ into respective isotropy groups.  Indeed, $\widetilde{(O_1)}_{\Z^\ell} =
\widetilde{(O_1)}_{\F_\ell}$ has $2(4^\ell - 1) + 2^\ell$ connected
components, each twisted sector a circle of length $1$ with
$\Z_2$- or $\Z_4$-isotropy, while $\widetilde{(O_2)}_{\Z^\ell} =
\widetilde{(O_2)}_{\F_\ell}$ has $4\cdot 2^\ell - 3$ connected
components, each twisted sector a circle of length $1$ or $2$
with $\Z_2$-isotropy.

% XXXXXXXXXXXXXXXXXXXXXXXX

\subsubsection{\cite[Example 3.5]{rsw}}
\label{subsubsec-3.5}
This example is similar to that treated in Section \ref{subsubsec-3.3}
above, though the singular set of
$O_2$ is more interesting.  We again have that the $\Gamma$-sectors
of $O_1$ and $O_2$ have a different number of connected components when
$\Gamma = \Z^\ell$ or $\F_\ell$ with $\ell \geq 1$.

The singular set of $O_1$ consists of three
circles $\Sp^1$ of length $2$ with isotropy groups $\Z_4$, $\Z_4$,
and $\Z_2$, respectively.  It follows that the twisted $\Z$-sectors
of $O_1$ consist of $7$ copies of $\Sp^1$ with length $2$, six with
trivial $\Z_4$-action and one with trivial $\Z_2$-action. The
orbifold $O_2$ has singular set given by a trivalent graph with $8$
vertices and $12$ edges forming the $1$-skeleton of a cube, where each vertex
has $\Z_2\times\Z_2$-isotropy and each edge has $\Z_2$-isotropy; the
action of each $\Z_2$ on $\R^3$ is given by a rotation through $\pi$
about a single axis, while the action of each $\Z_2\times\Z_2$ is
given by rotation through $\pi$ about two orthogonal axes. Because
the vertices do not have cyclic isotropy, they do not appear as
$0$-dimensional $\Z$-sectors.  Rather, the twisted $\Z$-sectors are
twelve mirrored intervals of length $1$ with $\Z_2$-isotropy on the
interior and $\Z_2\times\Z_2$-isotropy on the endpoints. That is,
each twisted sector is the quotient of a circle $\Sp^1$ of length
$2$ by a $\Z_2\times\Z_2$-action, where one $\Z_2$-factor acts
trivially and the other acts by reflection through a diameter.
Since the numbers of connected components do not match, $O_1$ and $O_2$ are not $\Z$-isospectral.
Similarly, for $\ell > 1$, $O_1$ and $O_2$ are easily seen to not be
$\Z^\ell$- or $\F_\ell$-isospectral. Indeed,
$\widetilde{(O_1)}_{\Z^\ell} = \widetilde{(O_1)}_{\F_\ell}$ has
$2(4^\ell - 1) + 2^\ell$ connected components, each twisted sector a
circle of length $2$ with $\Z_2$- or $\Z_4$-isotropy.  On the other
hand, $\widetilde{(O_2)}_{\Z^\ell} = \widetilde{(O_2)}_{\F_\ell}$
has $2^{2\ell+3} - 3\cdot 2^{\ell+2} + 5$ connected components
consisting of $8(4^\ell - 3\cdot 2^\ell + 2)$ points with
$\Z_2\times\Z_2$-isotropy, $12(2^\ell-1)$ mirrored intervals with
$\Z_2$-isotropy on the interior, and the nontwisted sector.

% XXXXXXXXXXXXXXXXXXXXXXXX

\subsubsection{\cite[Example 3.7]{rsw}}
\label{subsubsec-3.7}
The orbifolds in this example are again similar to
those in Section \ref{subsubsec-3.3} above, and are not
$\Gamma$-isospectral for any $\Gamma$ that admits a nontrivial
homomorphism to $\Z_2$.  However, it is of interest to note
that the asymptotic expansions of the $\Gamma$-heat kernels of $O_1$ and $O_2$ coincide
for every group $\Gamma$; see Section~\ref{subsec-HeatKer}.

The orbifold $O_1$ in this case has a singular set
consisting of two copies of $\Sp^1$ of length $\sqrt{2}$ with
$\Z_2$-isotropy.  Hence the twisted $\Z$-sectors consist of
two copies of $\Sp^1$ of length $\sqrt{2}$ with trivial
$\Z_2$-action. The singular set of $O_2$ consists of four copies of
$\Sp^1$ of length $1/\sqrt{2}$ with $\Z_2$-isotropy, so that the
twisted $\Z$-sectors consist of four copies of $\Sp^1$ of length
$1/\sqrt{2}$ equipped with trivial $\Z_2$-action.  Again, the
$\Z$-sectors have different numbers of connected components, and
hence $O_1$ and $O_2$ are not $\Z$-isospectral.  Similarly, for any
$\Gamma$, it is easy to see that $\widetilde{(O_1)}_\Gamma$ consists
of the nontwisted sector and $2(|\mathrm{HOM}(\Gamma,\Z_2)|-1)$
circles of length $\sqrt{2}$ with $\Z_2$-isotropy, while
$\widetilde{(O_2)}_\Gamma$ consists of the nontwisted sector and
$4(|\mathrm{HOM}(\Gamma,\Z_2)|-1)$ circles of length $1/\sqrt{2}$
with $\Z_2$-isotropy.  Therefore, $O_1$ and $O_2$ are not
$\Gamma$-isospectral for any $\Gamma$ that admits a nontrivial
homomorphism to $\Z_2$.

To see that that the asymptotic expansions of the $\Gamma$-heat kernels of
$O_1$ and $O_2$ coincide for every $\Gamma$, note that as $O_1$ and $O_2$
are isospectral, the usual heat kernels of $O_1$ and $O_2$ coincide.  In addition, the
asymptotic expansion of the heat kernel of a $1$-dimensional
manifold with connected components $M_1, \dots, M_n$ is given by
$(l_1 + \cdots +l_n)(4\pi t)^{- \frac{1}{2}}$ where $l_i$ is the length of $M_i$; see \cite[Section 9]{CahnWolf}
or \cite[Section 1.2]{poltervich}.  Hence, as the twisted
$\Gamma$-sectors of $O_1$ and $O_2$ consist of circles whose lengths
sum to $2\sqrt{2}(|\mathrm{HOM}(\Gamma,\Z_2)|-1)$, the contributions
of the twisted sectors to the asymptotic expansions of the
$\Gamma$-heat traces coincide.

% XXXXXXXXXXXXXXXXXXXXXXXXXXXXXXXXXXXXXXXXXXXXXXXXXXXXXXXXXXXXXX

\subsection{Lens space examples of Shams}
\label{subsec-shams} In \cite{shams}, Shams Ul Bari studies orbifold
lens spaces, orbifolds given by the quotient of the standard unit sphere by
a cyclic group of isometries.  Several pairs of isospectral,
nonisometric orbifolds are determined.  In each example, a pair of
orbifolds $O_1$ and $O_2$ is given of the form $G_1\ltimes \Sp^n$
and $G_2\ltimes \Sp^n$, respectively, where $G_1$ and $G_2$ are
cyclic groups of the same order acting as isometries on $\Sp^n$.  In
every example, the singular sets of $O_1$ and $O_2$ are identical,
given by spheres or products of spheres with the standard metric,
and the isotropy groups of these singular sets coincide.  It
therefore follows that the collection of twisted $\Gamma$-sectors of
$O_1$ is isometric to the collection of twisted $\Gamma$-sectors of
$O_2$ for any $\Gamma$, and hence each pair of isospectral lens
spaces is in fact $\Gamma$-isospectral for every $\Gamma$.

% XXXXXXXXXXXXXXXXXXXXXXXXXXXXXXXXXXXXXXXXXXXXXXXXXXXXXXXXXXXXXX
% XXXXXXXXXX    SECTION: SUNADA'S THEOREM
% XXXXXXXXXXXXXXXXXXXXXXXXXXXXXXXXXXXXXXXXXXXXXXXXXXXXXXXXXXXXXX

\section{The Sunada method and ${\Gamma}$-isospectrality}
\label{sec-Sunada}
Early examples of isospectral pairs of manifolds were produced using ad hoc arguments.
Sunada was the first to introduce a systematic method for producing isospectral manifolds,
\cite{sunada}.   His technique is based on identifying triples $(G, H_1,H_2)$ of finite
groups, with $H_1,H_2\leq G$, acting freely by isometries on a compact Riemannian manifold
$(M,g)$.  If $H_1$ and $H_2$ are \textit{almost conjugate} in $G$, meaning that each conjugacy
class in $G$ intersects $H_1$ and $H_2$ in the same number of elements, then $H_1\backslash M$
and $H_2\backslash M$ are isospectral manifolds.

In \cite{ikedagrassmann}, Ikeda gave a simple proof of Sunada's theorem that makes it evident that the
group $G$ can be any subgroup of the group of isometries of $(M,g)$, and that $H_1$ and $H_2$ need not
act freely.   (In his statement of the theorem, Ikeda assumed that $H_1$ and $H_2$ act freely, but did not
use the assumption in his proof.)  Thus we have the following.

\begin{theorem}[\cite{sunada}, \cite{ikedagrassmann}]\label{sunada}
Suppose that $(M,g)$ is a compact Riemannian manifold and that $G$ is a group that acts on $(M,g)$
on the left by isometries.  Suppose that $H_1$ and $H_2$ are finite, almost conjugate subgroups of $G$.
Then $O_1=H_1\backslash M$ and $O_2=H_2\backslash M$, with their respective submersion metrics, are
isospectral orbifolds.
\end{theorem}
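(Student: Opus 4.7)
The plan is to follow Ikeda's character-theoretic proof of Sunada's theorem, taking care to verify that the freeness of the $H_i$-actions is never actually used. The starting point is the identification of $C^\infty(O_i)$ with the space $C^\infty(M)^{H_i}$ of $H_i$-invariant smooth functions on $M$, and the corresponding identification of $\Delta_{O_i}$ with the restriction of $\Delta_M$ to this invariant subspace. Since $H_i \leq G$ acts on $M$ by isometries, $\Delta_M$ commutes with the $H_i$-action and so preserves $C^\infty(M)^{H_i}$, making this restriction well-defined.

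First, I would decompose $L^2(M)$ as the Hilbert space direct sum $\bigoplus_\lambda E_\lambda$ of the eigenspaces of $\Delta_M$. Each $E_\lambda$ is finite-dimensional, and because $G$ acts by isometries, each $E_\lambda$ is $G$-invariant, hence a finite-dimensional representation of $G$. Restricting to $H_i$-invariants gives $L^2(O_i) = \bigoplus_\lambda E_\lambda^{H_i}$, so the spectrum of $\Delta_{O_i}$ consists of those $\lambda$ with $E_\lambda^{H_i} \neq 0$, with multiplicity $\dim E_\lambda^{H_i}$. It therefore suffices to prove that $\dim E_\lambda^{H_1} = \dim E_\lambda^{H_2}$ for every $\lambda$.

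Second, I would apply the standard averaging/character formula: for any finite-dimensional representation $V$ of a finite group $H$,
\[
    \dim V^H = \frac{1}{|H|}\sum_{h \in H} \chi_V(h),
\]
where $\chi_V(h) = \mathrm{tr}(h|_V)$. Taking $V = E_\lambda$ and $H = H_i$ yields
\[
    \dim E_\lambda^{H_i} = \frac{1}{|H_i|}\sum_{h \in H_i} \chi_{E_\lambda}(h).
\]
Because $E_\lambda$ carries a $G$-representation, the character $\chi_{E_\lambda}$ is constant on $G$-conjugacy classes. The almost conjugacy hypothesis states that every $G$-conjugacy class meets $H_1$ and $H_2$ in the same number of elements; summing over all classes also gives $|H_1| = |H_2|$. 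Consequently the two averages agree, so $\dim E_\lambda^{H_1} = \dim E_\lambda^{H_2}$ for each $\lambda$, proving that $O_1$ and $O_2$ are isospectral.

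The main point requiring attention is verifying that the above identifications go through in the orbifold (non-free) setting and that the argument uses only almost conjugacy, not freeness. The latter is essentially built into the proof: freeness of the $H_i$-action is needed to conclude that $H_i\backslash M$ is a manifold, but the entire eigenspace-and-character computation above is insensitive to whether the quotient is a manifold or merely an orbifold, so long as $C^\infty(O_i)$ is interpreted as $H_i$-invariant smooth functions on $M$ and $\Delta_{O_i}$ as the restriction of $\Delta_M$. For the former, the fact that $H_i$ acts by isometries on $(M,g)$ ensures that the submersion metric on $O_i$ is well-defined and that the Laplacian computed on the orbifold agrees with the restriction of $\Delta_M$ to $H_i$-invariant functions, which is precisely the setup used in the preceding sections of the paper.
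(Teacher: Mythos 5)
Your proof is correct and is essentially the argument the paper itself relies on: the paper does not reprove the theorem but cites Ikeda's character-theoretic proof, which is exactly your computation that the multiplicity of $\lambda$ in $\mathrm{Spec}(H_i\backslash M)$ equals $\dim E_\lambda^{H_i} = \frac{1}{|H_i|}\sum_{h\in H_i}\chi_{E_\lambda}(h)$, together with the observation that $\chi_{E_\lambda}$ is a class function on $G$ and that almost conjugacy forces the two averages to agree. Your added remarks --- that freeness is used only to make the quotient a manifold and plays no role in the eigenspace computation, and that $C^\infty(O_i)$ is by definition $C^\infty(M)^{H_i}$ with $\Delta_{O_i}$ the restriction of $\Delta_M$ --- are precisely the points the paper highlights when explaining why Ikeda's proof extends to the orbifold setting.
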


We remark that if $H_1$ and $H_2$ are actually conjugate in $G$, then the resulting orbifolds
will be isometric.

In general, isospectral pairs or families of orbifolds arising from Sunada's method are not necessarily
$\Gamma$-isospectral for any particular choice of $\Gamma$.   We see this by noting that, as explained
in Section~\ref{sec-RSW}, none of the pairs of isospectral orbifolds in Examples 2.7, 2.8, 2.9, 3.7, or
3.9 in \cite{rsw}, all of which arise from an application of Sunada's theorem, are $\mathbb{Z}^\ell$-isospectral
for any positive $\ell$.  Similarly, although the orbifolds in any family of isospectral
orbifolds constructed by Shams, Stanhope, and Webb in \cite{ssw} are isospectral via Sunada's theorem,
they are pairwise not $\mathbb{Z}$-isospectral as demonstrated in Section~\ref{subsec-ssw}.

For given finitely generated discrete group $\Gamma$, in order to conclude that two Sunada-isospectral orbifolds are $\Gamma$-isospectral, we have the following.

\begin{theorem}\label{sunadasectors}
Let $(M,g)$ be a compact Riemannian manifold and $G$ a group acting on $(M,g)$ on the left by isometries.
Let  $H_1$ and $H_2$ be almost conjugate finite subgroups of $G$, and suppose that $\Gamma$ is a finitely
generated discrete group.  If there is a bijective correspondence between homomorphisms
$\varphi\co \Gamma\to H_1$ whose images have nonempty fixed point sets and homomorphisms
$\psi\co \Gamma\to H_2$ such that for each pair $\varphi,\psi$ there is an isometry
$i\co M^{\langle\varphi\rangle}\to M^{\langle\psi\rangle}$ and $C_{H_2}(\psi)$ is
almost conjugate to $iC_{H_1}(\varphi)i^{-1}$ in the isometry group of $M^{\langle\psi\rangle}$,
then $H_1\backslash M$ and $H_2\backslash M$ are $\Gamma$-isospectral.
\end{theorem}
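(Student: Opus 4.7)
The plan is to apply the classical Sunada theorem (Theorem \ref{sunada}) one $\Gamma$-sector at a time. By Definition \ref{defn-spectrum}, $\mathrm{Spec}_\Gamma(H_i \backslash M)$ is the union of the Laplace spectra of the orbifolds $C_{H_i}(\varphi) \ltimes M^{\langle\varphi\rangle}$ as $(\varphi)$ ranges over $H_i$-conjugacy classes of homomorphisms $\Gamma \to H_i$; only classes with $M^{\langle\varphi\rangle} \neq \emptyset$ contribute nontrivially. Since the spectrum of a disjoint union of Riemannian orbifolds is the union of the spectra of its components (with multiplicity), it suffices to prove, for each pair $(\varphi,\psi)$ given by the bijective correspondence in the hypothesis, the equality
\[
    \mathrm{Spec}\bigl( C_{H_1}(\varphi) \ltimes M^{\langle\varphi\rangle} \bigr)
    = \mathrm{Spec}\bigl( C_{H_2}(\psi) \ltimes M^{\langle\psi\rangle} \bigr).
\]

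Fix such a pair and let $i\co M^{\langle\varphi\rangle} \to M^{\langle\psi\rangle}$ be the given isometry. Set $C_1^\prime := i\, C_{H_1}(\varphi)\, i^{-1}$, regarded as a finite subgroup of $\mathrm{Isom}(M^{\langle\psi\rangle})$. Because $i$ intertwines the $C_{H_1}(\varphi)$-action on $M^{\langle\varphi\rangle}$ with the $C_1^\prime$-action on $M^{\langle\psi\rangle}$, it induces an isometry between the quotient Riemannian orbifolds $C_{H_1}(\varphi) \ltimes M^{\langle\varphi\rangle}$ and $C_1^\prime \ltimes M^{\langle\psi\rangle}$; in particular these have identical Laplace spectra. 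By hypothesis, $C_1^\prime$ and $C_{H_2}(\psi)$ are almost conjugate finite subgroups of $\mathrm{Isom}(M^{\langle\psi\rangle})$, acting by isometries on the compact Riemannian manifold $M^{\langle\psi\rangle}$, which is a totally geodesic closed submanifold of $M$ as it is the fixed-point set of a finite group of isometries. Applying Theorem \ref{sunada} with ambient group $\mathrm{Isom}(M^{\langle\psi\rangle})$ and almost conjugate pair $C_1^\prime, C_{H_2}(\psi)$ then yields $\mathrm{Spec}(C_1^\prime \ltimes M^{\langle\psi\rangle}) = \mathrm{Spec}(C_{H_2}(\psi) \ltimes M^{\langle\psi\rangle})$, which combined with the isometry above gives the required sector-level equality.

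Taking the union over the bijective correspondence produces $\mathrm{Spec}_\Gamma(H_1 \backslash M) = \mathrm{Spec}_\Gamma(H_2 \backslash M)$, proving $\Gamma$-isospectrality. No serious obstacle is anticipated: the hypotheses are engineered precisely so that Sunada's theorem applies on each matched pair of fixed-point submanifolds after transporting one centralizer into the isometry group of the other via $i$. The only piece requiring a line of care is bookkeeping—observing that the ``correspondence of homomorphisms'' in the hypothesis may be (and is implicitly) interpreted at the level of $H_i$-conjugacy classes, which is the indexing set for the $\Gamma$-sectors, and that the orbifold isometry descended from $i$ and the Sunada isospectrality on $M^{\langle\psi\rangle}$ compose to give an equality of spectra of orbifolds that may themselves be disconnected or noneffective.
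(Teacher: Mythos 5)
Your proposal is correct and follows essentially the same route as the paper: the paper's proof likewise handles the nontwisted sector via Theorem~\ref{sunada} applied to $H_1$, $H_2$, and each twisted sector by applying Theorem~\ref{sunada} on the fixed-point submanifold after transporting $C_{H_1}(\varphi)$ across the isometry $i$, then assembling the sector-wise isospectralities into the $\Gamma$-spectrum. Your write-up simply makes explicit the details (the ambient group $\mathrm{Isom}(M^{\langle\psi\rangle})$, the induced orbifold isometry, and the conjugacy-class bookkeeping) that the paper's three-sentence proof leaves implicit.
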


\begin{proof}
Since $H_1$ and $H_2$ are almost conjugate, by Theorem~\ref{sunada}, $H_1\backslash M$ and $H_2\backslash M$ are isospectral.   Since $i\co  M^{\langle\varphi\rangle}\to M^{\langle\psi\rangle}$ is an isometry, also by Theorem~\ref{sunada}, $C_{H_1}(\varphi)\backslash M^{\langle\varphi\rangle}$ and $C_{H_2}(\psi)\backslash M^{\langle\psi\rangle}$ are isospectral orbifolds.
Thus, there is a bijective, isospectral correspondence between the sectors of $H_1\backslash M$ and $H_2\backslash M$, so by definition of the $\Gamma$-spectrum, $H_1\backslash M$ and $H_2\backslash M$ are $\Gamma$-isospectral.
\end{proof}

\begin{remark}
We note that since pairs of orbifolds arising from Theorem~\ref{sunada} are $p$-isospectral (i.e. are isospectral for the Laplace operator acting on $p$-forms) for all $p$, orbifolds arising from Theorem~\ref{sunadasectors} are $\Gamma$-isospectral on $p$-forms for all $p$ as well.
\end{remark}

We now construct a pair of orbifolds that have nontrivial $\Z$- and
$\Z^2$-sectors that are $\Gamma$-isospectral for all $\Gamma$ by
Theorem~\ref{sunadasectors}.

\begin{example}\label{ex-basedonrsw}
Let $K_1$ and $K_2$ denote the subgroups of $SO(6)$ defined in Section~\ref{subsec-rswHS},
Equations~\ref{eq-RSWGp1} and \ref{eq-RSWGp2}.
Recall that $K_1$ and $K_2$ are almost conjugate but not conjugate in $SO(6)$.
For $i=1,2$, let $K_i^{\Delta_2}$ denote the subgroup of $SO(12)$ isomorphic to $K_i$ given by identifying
$K_i$ with the diagonal in $K_i\times K_i < SO(12)$.

We define the orbifolds $O_1$ and $O_2$ as biquotients of $SO(15)$.
To begin, identify $SO(3)$ with the subgroup of $SO(15)$ consisting of matrices of the form
\begin{equation*}
\begin{bmatrix}
A& 0\\
0 & I_{12}
\end{bmatrix}
\end{equation*}
where $A\in SO(3)$.  Similarly, identify $SO(12)$ with the subgroup of $SO(15)$ of matrices
\begin{equation*}
\begin{bmatrix}
I_3 & 0\\
0 & B
\end{bmatrix}
\end{equation*}
where $B\in SO(12)$.  Using this identification, we may think of $K_i^{\Delta_2}<SO(12)$  as a subgroup of $SO(15)$.
Let $G_i<SO(15)$, $i=1,2$, be the subgroup isomorphic to $\mathbb{Z}_2^{5}$ generated by $a_{12}$, $a_{23}$,
and $K_i^{\Delta_2}$, and note that the $K_i^{\Delta_2}$ act on coordinates $4$ through $15$.
Furthermore, $G_1$ and $G_2$ are almost conjugate but not conjugate
in $SO(15)$ for the same reason that $K_1$ and $K_2$ are almost conjugate but not conjugate in $SO(6)$.

Let $M=SO(15)/SO(3)$. Then $M$ can be identified with the set of
$12$-frames in $\mathbb{R}^{15}$, where the $12$-frame associated to
the coset $bSO(3)$ of $b\in SO(15)$ is given by the last $12$
columns of $b$.  For $i = 1, 2$, let $O_i$ be the orbifold presented by
$G_i \ltimes M$ equipped with the submersion metric arising from a fixed biinvariant
metric on $SO(15)$.  By Theorem 2.5 in \cite{rsw}, $O_1$ and $O_2$ are isospectral orbifolds.

We now compute the sectors of the orbifolds $O_1$ and $O_2$.
Every element of $G_i$ is diagonal with eigenvalues $1$ or $-1$.  If
we identify an element $bSO(3)\in M$ with a $15\times 12$ matrix
having orthonormal columns, the left action of an element $h\in G_i$
on $bSO(3)$ negates the rows in $bSO(3)$ corresponding to the
positions of $-1$ on the diagonal in $h$.  Thus, for an element
$bSO(3)\in M$ to be fixed by $h$,  $bSO(3)$ must have a zero row
corresponding to the placement of each $-1$ in $h$.  For any $h\in
G_i$, we then see that $M^{\langle h\rangle}$ is the set of $12$-frames in
$\mathbb{R}^{15-m}$ where $m$ is the multiplicity of $-1$ as an
eigenvalue of $h$.  This implies that in order for an element of
$G_i$ to have a nonempty fixed point set, it must have eigenvalue
$-1$ with multiplicity no more than $3$.   We also note that by
construction, only even $m$ occur.

Since every element of $K_i^{\Delta_2}$ has eigenvalue $-1$ with
multiplicity of at least $4$, no element of $K_i^{\Delta_2}$ has
nonempty fixed point set in $M$.  Therefore, only the elements
$a_{12}$, $a_{13}$, and $a_{23}$ have nonempty fixed point sets.
Hence, the only subgroups of $G_i$ that have nonempty fixed point
set in $M$ are $\langle a_{12}\rangle$, $\langle a_{13}\rangle$, $\langle a_{23}\rangle$, and
$\langle a_{12}, a_{13}\rangle \cong (\Z_2)^2$.  Note that $M^{\langle
a_{12}\rangle}$, $M^{\langle a_{13}\rangle}$, and $M^{\langle a_{23}\rangle}$ correspond to the
collection of $12$-frames in $\R^{13}$, while $M^{\langle a_{12},
a_{13}\rangle}$ corresponds to the collection of $12$-frames in
$\R^{12}$. It follows that for any finitely generated discrete group
$\Gamma$, the bijection between homomorphisms $\varphi\co\Gamma\to G_1$
and $\psi\co\Gamma\to G_2$ with nonempty fixed point set required in
Theorem~\ref{sunadasectors} is trivial, as is the isometry $i\co
M^{\langle \varphi\rangle}\to M^{\langle\psi\rangle}$.  Then as
$C_{G_1}(\varphi) = G_1$ and $C_{G_2}(\psi) = G_2$ are almost
conjugate, it follows that $O_1$ and $O_2$ are $\Gamma$-isospectral
for all $\Gamma$.  Note that as $\langle a_{12},
a_{13}\rangle$ is not a homomorphic image of $\Z$, both $O_1$ and
$O_2$ have $\Z^2$-sectors that do not appear as $\Z$-sectors.

To show that $O_1$ and $O_2$ are not isometric, note that the lowest-dimensional
$\Z^2$-sectors of both $O_1$ and $O_2$ are the $66$-dimensional sectors
corresponding to homomorphisms with image $\langle a_{12}, a_{13} \rangle$.
They are given by $K_i^{\Delta_2} \backslash SO(12)$
for $i = 1, 2$, respectively.  Hence, it will be sufficient to show that
$K_1^{\Delta_2} \backslash SO(12)$ is not isometric to $K_2^{\Delta_2} \backslash SO(12)$.

Suppose for contradiction that $K_1^{\Delta_2} \backslash SO(12)$
and $K_2^{\Delta_2} \backslash SO(12)$ are isometric, and consider
the biquotients $O_1^\prime = K_1^{\Delta_2} \backslash SO(12)/K_1^{\Delta_2}$
and $O_2^\prime = K_2^{\Delta_2} \backslash SO(12)/K_1^{\Delta_2}$.  Then the manifold
$K_1^{\Delta_2} \backslash SO(12) = K_2^{\Delta_2} \backslash SO(12)$ is by hypothesis
a common Riemannian cover for both $O_1^\prime$ and $O_2^\prime$.
By \cite[Corollary 2.6]{rsw}, $O_1^\prime$ and $O_2^\prime$ are
isospectral and have different maximal isotropy orders.  In fact, computations similar
to those in Section \ref{subsubsec-rswSO3} demonstrate that both are noneffective
orbifolds with generic isotropy $\Z_2$.  The orbifold
$O_1^\prime$ contains points with isotropy $(\Z_2)^3$ that form strata of the
singular set of dimension $18$, while the lowest-dimensional strata of the
singular set of $O_2^\prime$ are of dimension $34$.

To see this, for each $g \in K_i$, let $\widetilde{g}$ denote the corresponding element of
$K_i^{\Delta_2}$.  We indicate these elements using coordinates in $SO(12)$
rather than $SO(15)$ for simplicity.  Then the element
$\widetilde{-I}$ acts trivially (on the right) on both $K_1^{\Delta_2} \backslash SO(12)$ and
$K_2^{\Delta_2} \backslash SO(12)$.  Similarly,
$\widetilde{a_{12}} \in K_1^{\Delta_2}$ fixes in both $K_1^{\Delta_2} \backslash SO(12)$
and $K_2^{\Delta_2} \backslash SO(12)$ components isometric to the set of right $K_i$-cosets of
matrices whose four $6\times 6$ blocks are of the form
\[
    \left[\begin{array}{c|c}
        \ast_{2\times 2}    &   0_{2\times 4}       \\
        \hline
        0_{4\times 2}       &   \ast_{4\times 4}
    \end{array}\right].
\]
This set of matrices in $SO(12)$ is diffeomorphic to $SO(4)\times SO(8)$, which is of dimension $34$,
and finitely covers the corresponding singular set in each orbifold.  In $O_2^\prime$, by an argument
similar to the one given in \cite[Example 2.8]{rsw}, these sets have
maximal isotropy $\langle \widetilde{I}, \widetilde{a_{12}}\rangle$ and hence are the
lowest-dimensional singular strata.  However, in $O_1^\prime$, the entire group
$K_1^{\Delta_2}$ fixes components corresponding to matrices
whose four $6\times 6$ blocks are of the form
\[
    \left[\begin{array}{ccc|ccc}
        \ast    &   0       &   0       &                       \\
        0       &   \ast    &   0       &      0_{3\times 3}    \\
        0       &   0       &   \ast    &                       \\ \hline
        & 0_{3\times 3}   &&    \ast_{3\times 3}
    \end{array}\right].
\]
Each such set of matrices is diffeomorphic to $SO(2)^3\times SO(6)$, which has dimension $18$, and finitely covers
the lowest-dimensional singular strata in $O_1^\prime$.

We conclude that $O_1^\prime$ and $O_2^\prime$ are isospectral orbifolds with a common Riemannian
cover such that the lowest-dimensional singular strata in each are of different dimensions.
This yields a contradiction when we demonstrate the following.
\end{example}

\begin{lemma}
\label{lem-GordRos}
Suppose $O_1^\prime$ and $O_2^\prime$ are isospectral orbifolds that have as a common Riemannian cover
the smooth manifold $M$.  Then the dimensions of the lowest-dimensional singular strata of
$O_1^\prime$ and $O_2^\prime$ coincide.
\end{lemma}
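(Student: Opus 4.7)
The plan is to analyze the asymptotic expansion of the heat trace decomposed by strata as in Equation \ref{eq-DGGW4.8byStrata}, exploiting the rigidity provided by the common Riemannian cover.

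First, since $M$ is a common Riemannian cover, I may write $O_i^\prime = \Gamma_i\backslash M$ for finite groups $\Gamma_i$ acting by isometries; by passing to effectivizations (using the fact noted in Section \ref{subsec-HeatKer} that the heat trace is invariant under effectivization), I may assume the actions are effective. By Proposition \ref{prop-VolDimHTrace}, isospectrality yields $\dim O_1^\prime = \dim O_2^\prime =: n$ and $\mathrm{vol}(O_1^\prime) = \mathrm{vol}(O_2^\prime)$. Because both orbifolds are locally modeled on the same manifold $M$, the manifold heat invariants $a_k$ appearing in the first sum of Equation \ref{eq-DGGW4.8byStrata} coincide for $O_1^\prime$ and $O_2^\prime$. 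Cancelling these common terms reduces isospectrality to the identity
\[
\sum_{N \in \mathcal{S}(O_1^\prime)}(4\pi t)^{-\dim N/2}\sum_{k\geq 0} b_{k,N}^{(1)}\,t^k
\;=\;
\sum_{N \in \mathcal{S}(O_2^\prime)}(4\pi t)^{-\dim N/2}\sum_{k\geq 0} b_{k,N}^{(2)}\,t^k
\]
of formal asymptotic series, with each $b_{0,N}^{(i)}>0$ (being a volume-type quantity).

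Next, I compare coefficients of $t^{-d/2}$ on both sides by descending induction on $d$, starting with the maximum stratum dimension. For $d=d_{\max}(O_i^\prime)$, the coefficient of $t^{-d_{\max}/2}$ receives no subleading contribution from any higher-dimensional stratum (there is none), so positivity of the $b_{0,N}$ contributions forces $d_{\max}(O_1^\prime)=d_{\max}(O_2^\prime)$ and matching of the aggregated leading invariants at this dimension. Descending one dimension at a time, at each $d<d_{\max}$ the coefficient of $t^{-d/2}$ combines leading contributions $\sum_{\dim N=d}b_{0,N}^{(i)}$ from dim-$d$ strata with subleading contributions $b_{k,N}^{(i)}$ from strata of dimension $d+2k$ for $k\geq 1$. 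Under the inductive hypothesis that the higher-dimensional strata in $O_1^\prime$ and $O_2^\prime$ agree with matching local geometric data on $M$, their subleading contributions agree on both sides; isolating the remaining leading contribution forces the aggregate $\sum_{\dim N=d} b_{0,N}^{(i)}$ to agree. Now assume for contradiction that $d_1 := d_{\min}(O_1^\prime) < d_{\min}(O_2^\prime) =: d_2$. Applying the above at $d=d_1$ gives a strictly positive left-hand sum but a vanishing right-hand sum, the desired contradiction.

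The main obstacle is controlling the cross-contributions of higher-dimensional strata to the coefficient at $t^{-d/2}$: a priori, subleading invariants $b_{k,N}$ for $k\geq 1$ could cancel the leading stratum contribution at dimension $d_1$. The common cover hypothesis is essential here, because it guarantees that each stratum of $O_i^\prime$ is covered by a fixed set $M^\gamma$ of some isometry $\gamma$ of $M$, and the contribution $F_\gamma(t)=\int_{M^\gamma}K_M(x,\gamma x,t)\,dV$ to the heat trace has an asymptotic expansion determined entirely by $M$ and $\gamma$. Working at the level of individual $F_\gamma$'s grouped by $\dim M^\gamma$ (rather than aggregated invariants) allows one to equate corresponding subleading data on the two sides once the higher-dimensional strata are inductively matched, making the descending induction go through.
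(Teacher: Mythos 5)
Your overall route is the same as the paper's: invoke the stratified form of the heat expansion in Equation~\ref{eq-DGGW4.8byStrata}, note that isospectrality gives equal volumes and that the common cover $M$ forces the manifold-type invariants $a_k$ to agree, and thereby equate the two singular-strata sums as asymptotic series; the conclusion is then read off from the nonvanishing of the leading stratum coefficients $b_{0,N}$. The paper's proof stops essentially at that point, concluding in one sentence. Where you go further is in trying to justify, by a descending induction on stratum dimension, that the subleading coefficients $b_{k,N}$ ($k\geq 1$) of higher-dimensional strata cannot contaminate or cancel the leading contribution of the lowest-dimensional strata. The concern you raise is legitimate, but your induction does not close. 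The equality of the coefficients of $t^{-d/2}$ yields only an equality of aggregates, namely $\sum_{k\geq 0}(4\pi)^{-(d+2k)/2}\sum_{\dim N = d+2k} b_{k,N}^{(1)} = \sum_{k\geq 0}(4\pi)^{-(d+2k)/2}\sum_{\dim N = d+2k} b_{k,N}^{(2)}$, whereas your inductive hypothesis --- that the higher-dimensional strata of $O_1^\prime$ and $O_2^\prime$ ``agree with matching local geometric data,'' so that all of their subleading invariants coincide --- is strictly stronger than the equality of aggregate leading coefficients that the previous step of the descent actually delivers. Already at the second step ($d = d_{\max}-2$) you need $\sum_{\dim N = d_{\max}} b_{1,N}^{(1)} = \sum_{\dim N = d_{\max}} b_{1,N}^{(2)}$, and nothing in the argument provides this: the top-dimensional strata of the two orbifolds are fixed-point sets of different isometries of $M$, and equal weighted volumes do not force equal higher heat invariants.

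Appealing to the individual Donnelly terms $F_\gamma(t)=\int_{M^\gamma}K_M(x,\gamma x,t)\,dV$ does not repair this, because the groups $\Gamma_1$ and $\Gamma_2$ presenting $O_1^\prime$ and $O_2^\prime$ over $M$ are not conjugate and no bijection $\gamma_1\leftrightarrow\gamma_2$ matching fixed-point sets isometrically is available (indeed, in the application in Example~\ref{ex-basedonrsw} no such matching can exist); knowing that two finite sums of $F_\gamma$'s agree to leading order does not give agreement of their full expansions. So the key step of your proposal --- ``their subleading contributions agree on both sides'' --- is unsupported. Note that the paper's own proof does not attempt this induction at all: it passes directly from the equality of the two strata sums and $b_{0,N}\neq 0$ to the conclusion about the lowest-degree terms, following \cite[Proposition 3.4(ii)]{GordonRossetti}. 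You should examine that step carefully and either identify what additional structure it uses to exclude the cross-term cancellation you worry about, or supply a genuinely different mechanism (e.g., positivity or sign information on the relevant coefficients) before the descent can be considered complete.
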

Note that the orbifolds $O_1^\prime$ and $O_2^\prime$ are required to be covered by a manifold and hence
are good orbifolds.  The proof is similar to that of \cite[Proposition 3.4(ii)]{GordonRossetti}.
\begin{proof}
Because $O_1^\prime$ and $O_2^\prime$ are isospectral, they have the same volume.  In the expression of the asymptotic
expansion of the heat kernel given in Equation \ref{eq-DGGW4.8byStrata}, the $a_k$
depend only on the volume of the orbifold and the curvature of $M$, so that the $a_k$ coincide
for $O_1^\prime$ and $O_2^\prime$.  It follows that the second terms in Equation \ref{eq-DGGW4.8byStrata} must coincide as well, i.e.
\[
    \sum\limits_{N_1^\prime\in\mathcal{S}(O_1^\prime)} (4\pi t)^{-\mathrm{dim}(N_1^\prime)/2}\sum\limits_{k=0}^\infty b_{k,N_1^\prime} t^k
    =
    \sum\limits_{N_2^\prime\in\mathcal{S}(O_2^\prime)}
    (4\pi t)^{-\mathrm{dim}(N_2^\prime)/2}\sum\limits_{k=0}^\infty b_{k,N_2^\prime}t^k,
\]
where the sums are again over the singular strata of the orbifolds and the $b_{k,N_i^\prime}$ are the coefficients
for the strata of $O_i^\prime$.
However, as $b_{0,N_i^\prime} \neq 0$ for each
$N_i^\prime \in \mathcal{S}(O_i^\prime)$, $i=1,2$,
and since the lowest-degree terms must coincide, the claim follows.
\end{proof}

\begin{remark}
By \cite[Proposition 3.2]{farseageneuler}, the $\Gamma$-sectors of a
product orbifold $O \times O^\prime$ are given by the products of
the sectors of $O$ and $O^\prime$.  Clearly, if $O_1$ and $O_2$
satisfy the hypotheses of Theorem~\ref{sunada}, then so do $O\times
O_1$ and $O\times O_2$ for any fixed (quotient) orbifold $O$.
Therefore, by taking the product of the orbifolds in
Example~\ref{ex-basedonrsw} with an orbifold $O$ that has $\Z^\ell$-sectors
that do not appear as $\Z^{\ell-1}$-sectors, the resulting orbifolds
are $\Gamma$-isospectral for all $\Gamma$ and have
$\Z^{\ell}$-sectors that do not appear as $\Z^{\ell-1}$-sectors.
\end{remark}

In the next example, we consider the isospectral deformation of orbifolds found in \cite{procstan}.  We recall that these orbifold were found using the following generalization of the Sunada theorem in \cite{dg}, recast in the orbifold setting in \cite{procstan}.  We will show that any pair of orbifolds in the deformation are $\Gamma$-isospectral for any $\Gamma$.   While we will not be able to prove this using a direct application of Theorem~\ref{sunadasectors} because the groups involved are not finite, the philosophy will be the same.  We will show that there is a bijection between $\Gamma$-sectors such that corresponding sectors are isospectral; we will in fact show that they are isometric.

For a Lie group $G$ with subgroup $H$, we say that an automorphism $\Phi\co G\to G$
is an \textit{almost-inner automorphism relative to $H$} if for all $h\in H$ there is
an element $a\in G$ such that $\Phi(h)=aha^{-1}$.

\begin{theorem}[\cite{dg}, \cite{procstan}]\label{fancysunada}
Suppose that $G$ is a Lie group with simply connected, nilpotent identity component $G_0$.  Let $H$ be a discrete subgroup of $G$ such that $G=H G_0$ and $(G_0\cap H)\backslash G_0$ is compact.  Suppose that $G$ acts effectively and properly discontinuously on the left by isometries on $(M,g)$ with $H\backslash M$ compact.  Let $\Phi\co G\to G$ be an almost-inner automorphism relative to $H$.  Then, letting $g$ denote the submersion metric, the quotient orbifolds $(H\backslash M,g)$ and $(\Phi(H)\backslash M, g)$ are isospectral.
\end{theorem}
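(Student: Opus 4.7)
The plan is to adapt the strategy of DeTurck--Gordon \cite{dg} to the orbifold setting. The spectrum of $(H\backslash M, g)$ coincides with the spectrum of $\Delta$ acting on $H$-invariant smooth functions on $M$, since $H$ acts properly discontinuously and smooth orbifold functions on $H\backslash M$ lift precisely to $H$-invariant smooth functions on $M$; the analogous statement holds for $\Phi(H)\backslash M$. The goal is therefore to construct a unitary operator $T\co L^2(M) \to L^2(M)$ that commutes with $\Delta$ and intertwines the $H$-action with the $\Phi(H)$-action. Restricting $T$ to $L^2(M)^H \to L^2(M)^{\Phi(H)}$ then exhibits the isospectrality.

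To build $T$, I would decompose $L^2(M)$ into $G_0$-isotypic components. Because $G$ acts by isometries, the induced unitary representation of $G$ on $L^2(M)$ commutes with $\Delta$, so $\Delta$ preserves each isotypic piece and acts as a scalar on each irreducible summand. Since $G_0$ is simply connected and nilpotent, Kirillov's orbit method classifies its irreducible unitary representations by coadjoint orbits and, crucially, provides a character theory in which two irreducibles are equivalent if and only if their distributional characters agree. The almost-inner hypothesis furnishes, for each $h \in H$, an element $a_h \in G$ with $\Phi(h) = a_h h a_h^{-1}$, so for every irreducible unitary representation $\pi$ of $G_0$ the operator $\pi(\Phi(h))$ is unitarily conjugate to $\pi(h)$; consequently $\pi \circ \Phi \cong \pi$. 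Choosing an intertwining unitary on each isotypic component and patching the pieces together yields a unitary $T$ that commutes with the $G_0$-action, hence with $\Delta$, and that transports $H$-invariant functions to $\Phi(H)$-invariant ones.

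The principal obstacle is the coherent global assembly of these pointwise intertwiners into a single bounded unitary. Each intertwiner is determined only up to the commutant of the relevant $G_0$-representation, so one must choose a measurable section of the Plancherel decomposition of $L^2(M)$ consistently across the unitary dual; this is the technical heart of \cite{dg} and underlies the extension from inner to almost-inner automorphisms. In the orbifold setting one must additionally check that $T$ preserves the smooth structure, which reduces to showing that $G_0$-equivariant smoothness on $M$ is respected by the construction. This holds because both $H\backslash M$ and $\Phi(H)\backslash M$ are good orbifolds with common manifold cover $M$, and the hypothesis that $(G_0\cap H)\backslash G_0$ is compact with $G = HG_0$ ensures that the compactness of $H\backslash M$ and $\Phi(H)\backslash M$ needed for the spectral identification of the Laplacian is preserved.
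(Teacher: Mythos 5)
The paper does not prove Theorem~\ref{fancysunada}: it is recalled from \cite{dg}, as recast for orbifolds in \cite{procstan}, so your proposal can only be measured against the argument in those sources. Against that argument your sketch has a genuine gap at its central step, namely ``$\pi(\Phi(h))$ is unitarily conjugate to $\pi(h)$; consequently $\pi\circ\Phi\cong\pi$.'' The conjugating element $a_h$ depends on $h$, so element-wise conjugacy produces no single unitary intertwining $\pi\circ\Phi$ with $\pi$ --- the whole point of almost-inner (as opposed to inner) automorphisms is that no simultaneous conjugation exists. Worse, $\Phi$ is only assumed almost-inner \emph{relative to the discrete subgroup $H$}, so one cannot invoke the Gordon--Wilson fact that absolutely almost-inner automorphisms of a simply connected nilpotent group act trivially on the unitary dual. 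What the hypothesis actually controls is the quasi-regular representation: the content of \cite{dg} is that $L^2(H\backslash G)$ and $L^2(\Phi(H)\backslash G)$ are equivalent as unitary $G$-representations, proved via Kirillov theory and the Howe--Richardson multiplicity formulas (or a trace-formula computation) in which only conjugacy data of elements of $H$ enter. Your sketch never produces this equivalence, and relegating it to ``a measurable section of the Plancherel decomposition'' misidentifies where the difficulty lies.

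There is also a structural problem with the intended reduction. A unitary $T$ of $L^2(M)$ commuting with the $G_0$-action carries $(H\cap G_0)$-invariant vectors to $(H\cap G_0)$-invariant vectors; it has no reason to carry $H$-invariants to $\Phi(H)$-invariants. For that you need the intertwining relation $T\rho(h)=\rho(\Phi(h))T$ for all $h\in H$, which your construction does not supply and which is exactly what the equivalence of quasi-regular representations encodes. Moreover, when $M$ is noncompact and $H$ is infinite (the situation in Example~\ref{ex-basedonprocstan}, where $M=G$), the space $L^2(M)^H$ is zero, so ``restricting $T$ to $L^2(M)^H$'' is vacuous. The correct formulation identifies $L^2(H\backslash M)$ with the $G$-invariant part of $L^2(H\backslash G)\otimes L^2(M)$ and transplants eigenfunctions through a $G$-equivariant unitary $L^2(H\backslash G)\to L^2(\Phi(H)\backslash G)$, in the same spirit in which almost-conjugacy drives Theorem~\ref{sunada}; the compactness hypotheses guarantee discreteness of the spectrum rather than smoothness of $T$.
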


\begin{example}\label{ex-basedonprocstan}
Let $G$ be the Lie group
\begin{equation*}
\{(x_1,x_2,y_1,y_2,z_1,z_2)\, \vert \, x_i,y_i,z_i\in \R\}
\end{equation*}
with group multiplication given by
\begin{multline*}
(x_1,\dots,z_2)(x_1\p,\dots, z_2\p) \\
=(x_1+x_1\p, \dots, y_2+y_2\p, z_1+z_1\p+x_1y_1\p+x_2y_2\p, z_2+z_2\p+x_1y_2\p).
\end{multline*}

We denote elements of $\mathrm{Aut}(G)\ltimes G$ by ordered pairs $(\psi, \bar{x})$.  The group multiplication in $\mathrm{Aut}(G)\ltimes G$ is given by $(\psi, \bar{x})(\psi^\prime, \bar{x}^\prime) = (\psi\psi^\prime, \bar{x}\psi(\bar{x}^\prime))$ and $\mathrm{Aut}(G)\ltimes G$ acts on $G$ by $(\psi, \bar{x})\cdot\bar{g}= \bar{x}\psi(\bar{g})$.

Suppose that $\La$ is the integer lattice in $G$.  Let $\alpha\in \mathrm{Aut}(G)\ltimes G$ be
given by the ordered pair $(\varphi, (0,0,0,0,0,\frac{1}{2}))$, where $\varphi$ is the element of
$\mathrm{Aut}(G)$ given by
$\varphi(x_1,x_2,y_1,y_2,z_1,z_2) = (x_1,x_2,-y_1,-y_2,-z_1,-z_2)$.   Then
\begin{equation*}
\alpha(x_1,x_2,y_1,y_2,z_1,z_2) = (x_1,x_2,-y_1,-y_2,-z_1,-z_2+\tfrac{1}{2}).
\end{equation*}
Define $H= \La\cup \alpha\La$.

For $t\in [0,1)$ define an automorphism $\Phi_t\co G\to G$ by
\begin{equation*}
\Phi_t(x_1,x_2,y_1,y_2,z_1,z_2) =(x_1,x_2,y_1,y_2,z_1,z_2+ty_2).
\end{equation*}
From \cite{dg}, $\Phi_t$ is an almost-inner automorphism of $G$ relative to $\La$.  In particular, letting $a=(0,0,0,0,0,0)$ if
$y_2=0$ and $(t,-\tfrac{ty_1}{y_2},0,0,0,0)$ otherwise, we see that $\Phi_t(x)=ax a^{-1}$ for any $x\in G$.
Recalling that $HG=G\cup\alpha G$, extend $\Phi_t$ to an automorphism $\tilde{\Phi}_t\co HG\to HG$ by setting
$\tilde{\Phi}_t(x) = \tilde{\Phi}_t(\mathrm{Id},x) = (\mathrm{Id}, \Phi_t(x))$ and
$\tilde{\Phi}_t(\alpha x) = \tilde{\Phi}_t(\varphi,\alpha\cdot x) =(\varphi, \Phi_t(\alpha\cdot x))= (\varphi, \alpha\cdot\Phi_t(x))$.
Then $\tilde{\Phi}_t$ is an almost-inner automorphism relative to $H$; indeed, for $h=x$ or $\alpha x\in H$,
$\tilde{\Phi}_t(h) = (\mathrm{Id},a)h(\mathrm{Id},a^{-1})$ where $a$ is based on $x$ as above.  Therefore,
letting $g$ be an $HG$-invariant metric on $G$, by Theorem~\ref{fancysunada}, we have a continuous isospectral
family of orbifolds, $(\tilde{\Phi}_t(H)\backslash G, g)$, $t\in[0,1)$.  By \cite{procstan}, the deformation is nontrivial.

We now show that this deformation is $\Gamma$-isospectral for all $\Gamma$. We begin by computing the sectors of
$(\tilde{\Phi}_t(H)\backslash G, g)$.    Since $H=\La\cup\alpha\La$, $\tilde{\Phi}_t(H)= \tilde{\Phi}_t(\La)\cup\tilde{\Phi}_t(\alpha\La)$.
Elements of $\tilde{\Phi}_t(\La)$ are elements of $G$ and thus have empty fixed point sets.  Consider $\tilde{\Phi}_t(\alpha \lambda)$ where
$\lambda\in \La$.  If $\lambda= (a,b,c,d,e,f)$, then as an ordered pair in $\mathrm{Aut}(G)\ltimes G$,
\begin{equation*}
\tilde{\Phi}_t(\alpha\lambda) = (\varphi, (a,b,-c,-d,-e,-f+\tfrac{1}{2}-td)).
\end{equation*}
For $x=(x_1,x_2,y_1,y_2,z_1,z_2)\in G$, direct computation shows that $\tilde{\Phi}_t(\alpha\lambda)\cdot x = x$ if and only if
$a=b=0$, $c=-2y_1$, $d=-2y_2$, $e=-2z_1$, and $f=-2z_2+\tfrac{1}{2}-td$.  In this case, for
$\lambda =(0,0,c,d,e,f)$ with $c,d,e,f\in\mathbb{Z}$, the fixed
point set of $\tilde{\Phi}_t(\alpha\lambda)$ is
\begin{equation*}
G^{\langle\tilde{\Phi}_t(\alpha\lambda)\rangle}=\{(x_1,x_2,-\tfrac{c}{2}, -\tfrac{d}{2}, -\tfrac{e}{2}, -\tfrac{f}{2}+\tfrac{1}{4}-\tfrac{td}{2})\,\vert\, x_1,x_2\in\R \}.
\end{equation*}

We note that if $\lambda=(0,0,c,d,e,f)$, then the order of $\tilde{\Phi}_t(\alpha\lambda)$ in
$\mathrm{Aut}(G)\ltimes G$ is $2$.  Moreover, for $\lambda\neq\lambda\p$, the fixed point sets
of $\tilde{\Phi}_t(\alpha\lambda)$ and $\tilde{\Phi}_t(\alpha\lambda\p)$ do not intersect.
Therefore, the only nontrivial isotropy groups in $\tilde{\Phi}_t(H)\backslash G$ are
$\{\mathrm{Id}, \tilde{\Phi}_t(\alpha\lambda)\}\cong\mathbb{Z}_2$.  This implies that
if $\Gamma$ is a group that admits $\mathbb{Z}_2$ as a homomorphic image, the $\Gamma$-sectors of
$\tilde{\Phi}_t(H)\backslash G$ will all be of the form
$C_{\tilde{\Phi}_t(H)}(\tilde{\Phi}_t(\alpha\lambda))\backslash G^{\langle\tilde{\Phi}_t(\alpha\lambda)\rangle}$.
If $\Gamma$ does not admit $\Z_2$ as a homomorphic image, $\tilde{\Phi}_t(H)\backslash G$ has no nontrivial
$\Gamma$-sectors.

For $t\in [0,1)$ and for fixed $\lambda=(0,0,c,d,e,f)$, the action of the element
$i=(\mathrm{Id},(0,0,0,0,0,-\tfrac{td}{2}))\in\mathrm{Aut}(G)\ltimes G$ maps
$G^{\langle\alpha\lambda\rangle}$ to $G^{\langle\tilde{\Phi}_{t}(\alpha\lambda)\rangle}$.
Since $G^{\langle\tilde{\Phi}_{t}(\alpha\lambda)\rangle}$ is a totally geodesic submanifold
of $G$, the metric on $G^{\langle\tilde{\Phi}_{t}(\alpha\lambda)\rangle}$ is given by the
restriction of the metric from $G$.  Since $i\in G<HG$, and the metric on $G$ is $HG$-invariant,
$i$ is an isometry from $G^{\langle\alpha\lambda\rangle}$ to $G^{\langle\tilde{\Phi}_{t}(\alpha\lambda)\rangle}$.

For $\lambda=(0,0,c,d,e,f)$ and $t\in[0,1)$, the centralizer of $\tilde{\Phi}_t(\alpha\lambda)$ in $\tilde{\Phi}_t(H)$
is $\{(\mathrm{Id}, (p, q, 0, 0, \tfrac{cp}{2}+\tfrac{dq}{2}, \tfrac{dp}{2}))\}\cup \{(\varphi, (p, q ,-c,-d,-e-\tfrac{cp}{2}-\tfrac{dq}{2}, \tfrac{1}{2}-f-\tfrac{dp}{2}-dt))\}$
where $p,q\in\mathbb{Z}$ are such that $cp + dq$ and $dp$ are both even.  (Note that $C_{H}(\alpha\lambda)$ corresponds to $t=0$.)  For $i=(\mathrm{Id}, (0,0,0,0,0,\tfrac{-td}{2})$ as above, direct computation shows that
$iC_H(\alpha\lambda)i^{-1} = C_{\tilde{\Phi}_t(H)}(\tilde{\Phi}_t(\alpha\lambda))$.  Since these groups are in fact equal, for any $t\in [0,1)$, the sectors
$C_H(\alpha\lambda)\backslash G^{\langle\alpha\lambda\rangle}$ and $C_{\tilde{\Phi}_t(H)}(\tilde{\Phi}_t(\alpha\lambda)\backslash G^{\langle\tilde{\Phi}_{t}(\alpha\lambda)\rangle}$
are isometric, hence isospectral.
Therefore, the collection $\tilde{\Phi}_t(H)\backslash G$, $t\in[0,1)$, is a continuous deformation of orbifolds that are $\Gamma$-isospectral for all $\Gamma$.

\end{example}

For our final example, we construct a pair of 5-dimensional flat orbifolds that are $\Gamma$-isospectral for all $\Gamma$.  Here again, since the groups involved are not finite, we will not be able to apply Theorem~\ref{sunadasectors} directly but we will use the same idea that underlies the proof of that theorem: after proving that the orbifolds themselves are isospectral using the eigenvalue counting method of Miatello and Rossetti, we will show that there is a bijection between $\Gamma$-sectors such that corresponding sectors are isospectral.

\begin{example}\label{ex-torus}
Let $L_1$ and $L_2$ be a pair of 4-dimensional isospectral nonisometric lattices found in Section 2 of \cite{cs}.  Orthogonally extend $L_1$ and $L_2$ by vectors of the equal length to 5-dimensional isospectral, nonisometric lattices $\La_1$ and $\La_2$ (see \cite[p.154]{bgm}).

Let $g$ be the isometry of $\R^5$ given by reflection across the copy of $\R^4$ that contains $L_1$ and $L_2$.  For $i=1,2$, let $G_i$ be the subgroup of $O(5)\ltimes \R^5$ generated by $g$ and $\La_i$.  Letting $T_{\La_i}=\La_i\backslash \R^5$ and $O_i =G_i\backslash \R^5$, $T_{\La_i}$ covers $O_i$ and $O_i\cong \overline{G_i}\backslash T_{\La_1}$ where $\overline{G_i}:=G_i/\La_i$.  (See \cite[p.357]{rsw}.)  Letting $F$ denote the projection of $G_i$ onto $O(5)$, by the first isomorphism theorem, $F=\{\mathrm{Id}, g\}$ is isomorphic to $\overline{G}_i$.  Thus we can identify $O_i$ with $\Z_2\backslash T_{\La_i}$.

We confirm that $O_1$ and $O_2$ are isospectral using Miatello and Rosetti's eigenvalue counting formula, Theorem 3.1 in \cite{rsw}.  For any eigenvalue $\mu$, the multiplicity of $\mu$ in the spectrum of $O_i$ is given by
\begin{equation*}
d_{\mu}(G_i) = (\#F)^{-1}\sum_{B\in F} e_{\mu,B}(G_i), \textrm{ where }e_{\mu,B}(G_i) :=\sum_{v\in\La_i^*, \Vert v\Vert^2=\mu, Bv=v} e^{2\pi i\langle v,b\rangle}
\end{equation*}
and $b$ is chosen so that $BL_b\in G_i$.

It is straightforward to compute $d_{\mu}(G_i)$.  When $B=\mathrm{Id}$, let $b$ be any element of $\La_i$.  Since $\langle v,b\rangle\in \mathbb{Z}$ for all $v\in \La_i^*$,
\begin{equation*}
e_{\mu,\mathrm{Id}}(G_i) = \# \{v\in\La_i^*\, \vert\, \Vert v\Vert^2=\mu \}
\end{equation*}
i.e. $e_{\mu,\mathrm{Id}}(G_i)$ is the multiplicity of $\mu$ as an eigenvalue of $T_{\La_i}$.    Since $T_{\La_1}$ and $T_{\La_2}$ are isospectral, $e_{\mu,\mathrm{Id}}(G_1) = e_{\mu,\mathrm{Id}}(G_2)$.

When $B=g$, note that the elements of $\La_i^*$ that are fixed by $B$ are the elements of $L_i$.  Thus $e_{\mu,g}(G_i)$ is equal to the multiplicity of $\mu$ as an eigenvalue of $L_i\backslash\R^4$.  Since $L_1\backslash \R^4$ and $L_2\backslash\R^4$ are isospectral, $e_{\mu,g}(G_1)=e_{\mu,g}(G_2)$ for all $\mu$.

Therefore for any eigenvalue $\mu$, $d_{\mu}(G_1)=d_{\mu}(G_2)$ so $G_1\backslash\R^5$ and $G_2\backslash \R^5$ are isospectral orbifolds.

We now confirm that these orbifolds are $\Gamma$-isospectral for any $\Gamma$.
Since for $i=1,2$ the lattice $\La_i$ acts on $\R^5$ by translation, the only
finite subgroups of $G_i$ are of the form $\{\mathrm{Id}, (g,(0,0,0,0,e))\}\cong \Z_2$
where $(0,0,0,0,e)\in \La_i$. Thus for any finitely generated discrete group $\Gamma$, either
$\Gamma$ admits $\mathbb{Z}_2$ as a homomorphic image or it does not.  If it does, $O_i$ will
have a nontrivial $\Gamma$-sector for each nontrivial $\varphi\in\mathrm{HOM}(\Gamma, G_i)$ having image $\{\mathrm{Id}, (g,(\bar{0},e))\}$.
Since $\La_1$ and $\La_2$ are extensions of $L_1$ and $L_2$ by the same vector in $\R^5$ which is
orthogonal to both $L_1$ and $L_2$, for any homomorphism $\varphi\co \Gamma\to G_1$, there is an
obvious corresponding homomorphism $\psi\co \Gamma\to G_2$ that has the same image as $\varphi$.

If $\varphi(\Gamma)= \{\mathrm{Id}, (g,(\bar{0},e))\}$, then the fixed point set of  the image $\varphi(\Gamma)$
of $\varphi$ is $(\R^5)^{\langle\varphi\rangle} =\{(x,y,z,w,\tfrac{e}{2})\, \vert\, x,y,z,w\in\R\}$.
The centralizer of $\varphi(\Gamma)$ in $G_i$ is
$C_{G_i}(\varphi)=\{(\mathrm{Id},(\bar{a}, 0))\, \vert\, \bar{a}\in L_i\}\cup\{(g,(\bar{a},e))\,\vert\, \bar{a}\in L_i\}$.
A typical $\Gamma$-sector in $O_i$ is of the form $C_{G_i}(\varphi)\backslash (\R^5)^{\langle\varphi\rangle}$.

We note that for $i=1$ or $2$,  all of the $\Gamma$-sectors of $O_i$ are isometric to each other.  Indeed if $\varphi(\Gamma)=\{\mathrm{Id},(g,(0,0,0,0,0))\}$, then for any other homomorphism $\varphi^{\p}\co \Gamma\to G_i$ with image $\{\mathrm{Id}, (g,(\bar{0}, e))\}$, translation by $p=(\bar{0},\tfrac{e}{2})$ is an isometry from $(\R^5)^{\langle\varphi\rangle}$ to $(\R^5)^{\langle\varphi^{\p}\rangle}$ and $L_pC_{G_i}(\varphi)L_p^{-1} = C_{G_i}(\varphi^{\p})$.  Therefore the sectors $C_{G_i}(\varphi)\backslash (\R^5)^{\langle\varphi\rangle}$ and  $C_{G_i}(\varphi^{\p})\backslash (\R^5)^{\langle\varphi^{\p}\rangle}$ are isometric.

Finally, for any choice of $\Gamma$ that admits $\Z_2$ as a homomorphic image, the corresponding $\Gamma$-sectors of $O_1$ are $O_2$ are isospectral as follows.   Suppose that $\varphi\co \Gamma\to G_1$ has image $\{\mathrm{Id}, (g,(0,0,0,0,0))\}$.  The corresponding homomorphism $\psi\co \Gamma\to G_2$ has the same image.   The images of both $\varphi$ and $\psi$ also have the same fixed point sets, namely the copy of $\R^4$ fixed by the action of $g$ on $\R^5$.  Recalling that $F=\{\mathrm{Id},g\}$, the centralizer of $\varphi(\Gamma)$ in $G_1$ is given by $F\ltimes L_1$ and the centralizer of $\psi(\Gamma)$ in $G_2$ is given by $F\ltimes L_2$.   Since $L_1$ and $L_2$ are isospectral, by an argument similar to the one given above using Miatello and Rossetti's eigenvalue counting formula, $(F\ltimes L_1)\backslash \R^4$ is isospectral to $(F\ltimes L_2)\backslash \R^4$.  Since all other sectors in $O_i$ for $i=1$ or $2$ are isometric to these sectors respectively, we conclude that $O_1$ and $O_2$ are $\Gamma$-isospectral.

\end{example}

\bibliographystyle{amsplain}

\end{document}